\newtheorem{theorem}{Theorem}[section]
\newtheorem{lemma}[theorem]{Lemma}
\theoremstyle{definition}
\newtheorem{definition}[theorem]{Definition}
\theoremstyle{remark}
\numberwithin{equation}{section}
\begin{document}

\title{Comparing Shapes of High Genus Surfaces}

\author{Yanwen Luo}
\address{Department of Mathematics, University of California Davis, Davis
, California 95616}
\email{ywluo@ucdavis.edu}
\thanks{The author was supported in part by NSF Grant DMS-1719582.}

\keywords{Differential geometry, metric geometry}

\begin{abstract}
In this paper, we define a new metric structure on the shape space of a high genus surface. We introduce a rigorous definition of a shape of a surface and construct a metric based on two energies measuring the area distortion and the angle distortion of a quasiconformal homeomorphism. We show that the energy minimizer in a fixed homotopy class is achieved by a quasiconformal homeomorphism by the lower semicontinuity property of these two energies. 
\end{abstract}

\maketitle

\section{Introduction}
How to measure the difference between two shapes is a fundamental problem in computer graphics, computer vision, and medical imaging. In this paper, we investigate the shape comparison problem between two shapes of a surface of genus at least one by considering the following questions:
\begin{enumerate}
	\item What is the precise meaning of a ``shape" of a surface?
	\item How similar are two given shapes of a surface?
	\item How to construct the best global alignment of two shapes, namely an ``optimal" correspondence between two shapes?
\end{enumerate}
We always assume that the surfaces we discuss have genus at least one. We will define the shape space as the space of equivalence classes of Riemannian metrics on a fixed smooth surface, up to isometries isotopic to the identity on the surface.
We show that the shape space has a close connection with the Teichm\"uller
 space of a surface. Then we construct a metric on the shape space by introducing an energy for quasiconformal homeomorphisms of the surface, which measures the similarity of two shapes. We show that the infimum of this energy in a fixed homotopy class is achieved by a quasiconformal homeomorphism, which produces the ``optimal" correspondence between two shapes and realizes the distance between two shapes.

This problem has been studied extensively in the fields of surface registration, shape matching, shape morphing, and texture mapping. Effective algorithms have been developed if the topology of the surface is relatively simple, such as with the 2-dimensional disk or 2-dimensional sphere \cite{ gotsman2003fundamentals, gu2004genus, Hass2017}. However, there are few results about the computation of optimal maps between high-genus surfaces \cite{li2008globally, lui2014geometric, wong2014computation, zeng2012computing}. On the other hand, detecting the change of the shapes of high genus surfaces is crucial to understanding various applications. For example, the vestibular system in the inner ear is modelled by a genus-three surface, and the morphometry of the vestibular system has been an active research field in the analysis of Adolescent Idiopathic Scoliosis Disease\cite{wen2015landmark}. In the study of deformity of the vertebrae, the vertebrae bone is modeled by a genus-one surface\cite{lam2015landmark}.

Comparing shapes of high genus surfaces is much more challenging than the case of the 2-sphere.  Any two metrics on the 2-sphere are conformal to each other, but for high-genus surfaces, conformal maps are insufficient to measure the difference between two shapes. Algorithmically, the main difficulty is how to deal with the topology of the surfaces. One possible approach is to construct local injective maps from disk-like patches to some canonical domain and glue them to form a global map. This method requires a consistent way to cover the whole surface with patches. An alternative method is to cut the surface using a system of disjoint loops to a disk-like surface, but boundary conditions on the loops are not natural. 

The key to measuring the difference between two surfaces is finding a metric structure on the shape space of a surface. More precisely, for a metric $d$ defined on the shape space, given shapes $F_1$, $F_2$, and $F_3$, we require the following properties:
 \begin{enumerate}
	\item $d(F_1, F_2) \geq 0$;
	\item $d(F_1, F_2) = 0$ if and only if $F_1$ and $F_2$ represent the same element in the shape space;
	\item $d(F_1, F_2) = d(F_2, F_1)$;
	\item $d(F_1, F_2) + d(F_2, F_3) \geq d(F_1, F_3)$.
\end{enumerate}
These properties of metric structures imply that we can distinguish two different shapes if the two shapes are not isometric, independent of the order and stable under small perturbations or noise.

The main result of this paper gives a metric structure $d$ on the shape space of a high genus surface $\mathcal{S}(F)$, based on the energy $E(f)$ for a quasiconformal homeomorphism $f$ between two shapes. More precisely, we prove the following theorem in Section 4 of this paper. 
\begin{theorem}
	Let $F$ be a closed orientable connected surface of genus $g\geq 1$. The function $d$ induces a metric on the space of shapes $\mathcal{S}(F)$. Moreover, for any pair of shapes $(F, g_1)$ and $(F, g_2)$, there exists a quasiconformal homeomorphism $f: (F, g_1) \to (F, g_2)$ such that $E(f) = d((F, g_1), (F, g_2))$.
\end{theorem}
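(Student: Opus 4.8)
The plan is to separate the statement into its two assertions: first that $d$ satisfies the four metric axioms, and second that the defining infimum is always attained by a quasiconformal homeomorphism. I would treat the existence of minimizers first, since the definiteness and triangle axioms both appear to lean on it.

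For the \emph{existence of a minimizer}, I would run the direct method of the calculus of variations. Fix shapes $(F,g_1)$ and $(F,g_2)$ and choose a minimizing sequence $f_n$ of quasiconformal homeomorphisms in the prescribed homotopy class with $E(f_n) \to d((F,g_1),(F,g_2))$. The first step is to extract compactness: the angle-distortion part of $E$ should furnish a uniform bound on the maximal dilatation $K(f_n)$, so that the $f_n$ form a family of uniformly $K$-quasiconformal maps. After normalizing to remove the freedom coming from the (non-compact) group of conformal automorphisms or a choice of base-point or frame, the standard compactness theorem for $K$-quasiconformal maps yields a subsequence converging locally uniformly to a map $f$ which is again $K$-quasiconformal. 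The second step is to ensure that $f$ is a genuine homeomorphism and not a degenerate or collapsing limit; here the area-distortion part of $E$ is essential, since uniform control of area expansion and contraction prevents the limit from crushing positive-area regions to measure zero or from failing to be surjective. The final step invokes the lower semicontinuity of the two energies established earlier in the paper to conclude $E(f) \le \liminf_n E(f_n) = d$; since $f$ is itself an admissible competitor, $E(f) \ge d$, and equality follows.

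With minimizers in hand, the \emph{metric axioms} follow. Nonnegativity is immediate from $E \ge 0$. For \emph{symmetry}, I would show by the change-of-variables formula that $E(f)$ computed for $f\colon(F,g_1)\to(F,g_2)$ equals $E(f^{-1})$ computed for $f^{-1}\colon(F,g_2)\to(F,g_1)$: a $K$-quasiconformal map and its inverse share the same maximal dilatation, and the area-distortion integrand is built to be invariant under inversion after the substitution $y=f(x)$, so $f\mapsto f^{-1}$ is an energy-preserving bijection between the two competitor classes and hence $d$ is symmetric. For \emph{definiteness}, if $(F,g_1)$ and $(F,g_2)$ represent the same shape then an isometry isotopic to the identity realizes $E=0$, so $d=0$; conversely, if $d=0$ then the minimizer $f$ produced above satisfies $E(f)=0$, which forces both the angle distortion and the area distortion to vanish pointwise, so $f$ is simultaneously conformal and area-preserving, hence an isometry, and the two shapes coincide in $\mathcal{S}(F)$. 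For the \emph{triangle inequality} I would establish a subadditivity-under-composition estimate $E(h\circ f)\le E(f)+E(h)$, which is plausible because maximal dilatations and area-scaling factors compose multiplicatively and the energy is built from their logarithms; given minimizers $f$ for $d((F,g_1),(F,g_2))$ and $h$ for $d((F,g_2),(F,g_3))$, the composition $h\circ f$ is an admissible quasiconformal homeomorphism from $(F,g_1)$ to $(F,g_3)$ in the correct homotopy class, so $d((F,g_1),(F,g_3)) \le E(h\circ f) \le E(f)+E(h) = d((F,g_1),(F,g_2))+d((F,g_2),(F,g_3))$.

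The step I expect to be the main obstacle is guaranteeing that the limit $f$ of the minimizing sequence is a homeomorphism rather than a degenerate map. Bounded maximal dilatation alone keeps the limit quasiconformal wherever it is non-constant, but one must rule out collapse and loss of surjectivity using the area-distortion energy, and one must also fix the correct normalization so that the possibly non-compact ambiguity in the isotopy class (most visibly the translations in the genus-one case) does not let the minimizing sequence escape to the boundary of the space of maps. Reconciling the quasiconformal compactness theory with the two-term energy, and checking that lower semicontinuity survives the normalization, is where the real work lies.
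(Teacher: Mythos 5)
Your overall architecture matches the paper's: a direct-method existence argument (minimizing sequence, uniform dilatation bound, compactness, lower semicontinuity) combined with inverse-invariance for symmetry and subadditivity under composition for the triangle inequality. (The paper proves the axioms first, running the triangle inequality with minimizing sequences $f_n$, $g_n$ rather than attained minimizers, and proves existence second; your reversed order is logically fine and in fact makes the definiteness axiom cleaner.) However, two of your key steps are justified by reasoning that would fail as stated. The first is the subadditivity $E(h\circ f)\le E(f)+E(h)$. Your justification --- that ``area-scaling factors compose multiplicatively and the energy is built from their logarithms'' --- is valid only for the angle term $E_2=\frac12\|\log(\lambda_1/\lambda_2)\|_\infty$. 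The area term $E_1(f)=\bigl(\int_F(1-\sqrt{\lambda_1\lambda_2})^2\,dA_{g_1}\bigr)^{1/2}$ is not a logarithmic quantity, and the multiplicativity of Jacobians, $J_{h\circ f}=(J_h\circ f)\,J_f$, does not by itself yield subadditivity. The paper's proof of $E_1(h\circ f)\le E_1(h)+E_1(f)$ is a genuine computation: expand $(E_1(h)+E_1(f))^2$, rewrite the integral over $(F,g_2)$ via $dA_{g_2}=\lambda_1\lambda_2\,dA_{g_1}$, bound the cross term from below by Cauchy--Schwarz, and recombine the result into $\int_F(1-\sqrt{J_{h\circ f}})^2\,dA_{g_1}$. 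Without an argument of this kind your triangle inequality is unproven.

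The second gap is in the compactness step. You attribute the non-degeneration of the limit to the area-distortion energy and anticipate having to normalize away a non-compact automorphism group (translations in genus one). Both points are misplaced. The $L^2$ bound on $1-\sqrt{J_{f_n}}$ plays no role in preventing collapse; what does the work is the uniform dilatation bound alone: $K$-quasiconformal homeomorphisms between closed hyperbolic surfaces are $D(K)$-quasi-isometries, so both $f_n$ and $f_n^{-1}$ are equicontinuous, Arzel\`a--Ascoli gives a uniformly convergent subsequence, the bound on the inverses forces injectivity of the limit, and invariance of domain upgrades it to a homeomorphism. For the torus this quasi-isometry statement is not available off the shelf, and the paper proves equicontinuity separately by an extremal-length argument (its Lemma 4.4) --- a step your sketch misses entirely. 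No normalization is needed in either case, since the relevant isometry groups (finite for $g>1$, the compact translation group for the torus) are compact; the degeneration you worry about is an artifact of the genus-zero/planar theory where the conformal group is non-compact. Where the area energy genuinely requires care is in passing to the limit in $E_1$: the paper expands $\int_F(1-\sqrt{J_{f_n}})^2dA_{g_1}=\mathrm{Area}(F,g_1)+\mathrm{Area}(F,g_2)-2\int_F\sqrt{J_{f_n}}\,dA_{g_1}$ and uses weak sequential compactness of $\sqrt{J_{f_n}}$ in $L^2$, tested against the constant function $1$, to get convergence of the last term; your appeal to ``lower semicontinuity of the two energies'' leaves exactly this step as a black box.
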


This paper is organized as follows. In Section 2, we summarize previous work related to the computation of special maps between surfaces and some necessary mathematical background. In Section 3, we define the space of shapes $\mathcal{S}(F)$ and establish its connection with the Teichm\"uller
 space. In Section 4, we introduce an energy $E(f)$ for a quasiconformal homeomorphism $f$ on a surface and prove that this energy provides a metric $d$ on the shape space $\mathcal{S}(F)$, and an ``optimal" correspondence between two shapes. 

\section{Prior Work and Preliminary}
In this section, we summarize related work about various definitions of shape spaces and computational methods to find maps between surfaces. We assume that we have a closed connected orientable surface, a genus-zero surface $\mathbb{S}^2$ or high-genus surface $F$ with genus $g\geq 1$. Here we focus on three types of well-known classes maps between surfaces: conformal maps, harmonic maps, and quasiconformal maps. A comprehensive survey about surface parametrization using these maps can be found in Floater and Hormann\cite{floater2005surface}. 

\subsection{Conformal maps and Harmonic maps}
Conformal maps are the most familiar maps among these three maps. In the smooth theory, the fundamental result is the \textit{Uniformization Theorem} (see e.g.\cite{imayoshi2012introduction}).
\begin{theorem}
Every Riemannian metric on a closed surface $F$ is conformally equivalent to a complete Riemannian metric with constant curvature +1, 0, or -1, the sign depending on the sign of its Euler characteristic $\chi(F)$. The metric is unique up to isometry isotopic to the identity if the Euler characteristic is negative.
\end{theorem}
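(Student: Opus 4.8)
The plan is to reduce the theorem to a scalar semilinear elliptic equation for the conformal factor and then solve it case by case according to the sign of $\chi(F)$, treating existence and uniqueness separately. Fix a background metric $g_0$ and seek a conformal metric $g=e^{2u}g_0$ of constant curvature $k\in\{+1,0,-1\}$. Under a conformal change in dimension two the Gauss curvature transforms by $K_g=e^{-2u}\,(K_{g_0}-\Delta_{g_0}u)$, where $\Delta_{g_0}$ is the Laplace--Beltrami operator of $g_0$ (with nonpositive spectrum), so demanding $K_g\equiv k$ is equivalent to solving
\[
\Delta_{g_0} u = K_{g_0} - k\,e^{2u}.
\]
The first observation is that the Gauss--Bonnet theorem forces the sign of $k$: integrating $K_g\,dA_g$ yields $2\pi\chi(F)$, while $k\int_F dA_g$ has the sign of $k$, so a constant-curvature conformal metric can only have $\mathrm{sign}(k)=\mathrm{sign}(\chi(F))$. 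This both fixes the target curvature and shows the three cases are mutually exclusive.

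For existence when $\chi(F)<0$ (so $k=-1$) I would solve $\Delta_{g_0}u = K_{g_0}+e^{2u}$ variationally by minimizing
\[
J(u)=\int_F\Big(\tfrac12|\nabla_{g_0} u|^2 + K_{g_0}\,u + \tfrac12 e^{2u}\Big)\,dA_{g_0}
\]
over $H^1(F)$. The functional $J$ is strictly convex, and it is coercive because, writing $\bar u$ for the mean of $u$, the linear term equals $2\pi\chi(F)\,\bar u$ up to a contribution controlled by the Dirichlet energy; since $\chi(F)<0$ this forces $J\to+\infty$ as $\bar u\to-\infty$, while the exponential term forces $J\to+\infty$ as $\bar u\to+\infty$, and the Dirichlet term controls the oscillation of $u$. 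Hence a unique minimizer exists, and elliptic regularity bootstraps it to a smooth solution. The case $\chi(F)=0$ (the torus, $k=0$) is easier: the equation becomes the linear Poisson equation $\Delta_{g_0}u=K_{g_0}$, solvable precisely because $\int_F K_{g_0}\,dA_{g_0}=2\pi\chi(F)=0$, with the solution unique up to an additive constant corresponding to the overall scale.

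Uniqueness for $\chi(F)<0$ then follows immediately from the strict convexity of $J$: the conformal factor $u$, and hence the hyperbolic metric $e^{2u}g_0$, is unique. To match the stated form of uniqueness ``up to isometry isotopic to the identity,'' I would note that a hyperbolic surface of genus $\geq 2$ has trivial identity component of its isometry group, so no nontrivial isometry is isotopic to the identity; the clause in the statement records exactly the (here trivial) action of $\mathrm{Diff}_0(F)$ and contrasts with the torus, where translations are isometries isotopic to the identity and genuine non-uniqueness occurs. Equivalently, in the complex-analytic formulation one passes to the universal cover $\mathbb{D}$ and observes that a conformal map intertwining two Fuchsian uniformizations is a M\"obius transformation, hence an isometry that descends as claimed.

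The main obstacle is the positive case $\chi(F)>0$, i.e.\ $F=S^2$ and $k=+1$. Here the analogous functional is unbounded below and the variational argument fails; the equation $\Delta_{g_0}u=K_{g_0}-e^{2u}$ sits at the borderline of the Moser--Trudinger inequality and is subject to the Kazdan--Warner obstruction, so direct existence is genuinely delicate. The cleanest route is to bypass the PDE and invoke the classical uniformization of Riemann surfaces: isothermal coordinates (Korn--Lichtenstein) make $(F,g_0)$ a compact Riemann surface, its universal cover is biholomorphic to $\widehat{\mathbb{C}}$, and the round metric descends to a conformal metric of curvature $+1$. Since the present paper only uses surfaces of genus $g\geq 1$, this borderline case does not affect the sequel, but it is the single point where the self-contained PDE approach must defer to the deeper complex-analytic theorem.
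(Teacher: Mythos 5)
The paper does not prove this statement: it appears as Theorem 2.1, quoted as classical background (the Uniformization Theorem) with a pointer to the literature, so there is no internal proof to compare against. Judged on its own, your sketch is essentially correct and follows the standard variational route (Berger, Kazdan--Warner; it is also the approach of Fischer--Tromba, whom the paper cites elsewhere): the reduction to $\Delta_{g_0}u = K_{g_0}-k\,e^{2u}$, the Gauss--Bonnet argument pinning $\mathrm{sign}(k)=\mathrm{sign}(\chi(F))$, the strictly convex coercive functional for $\chi(F)<0$ (coercivity via Jensen on the mean and Poincar\'e on the oscillation, weak lower semicontinuity of the exponential term via Moser--Trudinger and Fatou), the linear Poisson equation with solvability condition $\int_F K_{g_0}\,dA_{g_0}=0$ for the torus, and the honest deferral to complex-analytic uniformization for $S^2$ are all sound; completeness is automatic because $F$ is closed, and the omitted genus-zero case is harmless since the paper only uses $g\geq 1$. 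Two details deserve tightening. First, strict convexity of $J$ yields literal uniqueness of the hyperbolic metric inside the pointwise conformal class $\{e^{2u}g_0\}$; to obtain the stated clause ``unique up to isometry isotopic to the identity'' one also needs that an isometry of a closed hyperbolic surface isotopic to the identity is the identity, and your inference of this from the triviality of the identity component of the isometry group is too quick --- a finite-order isometry could a priori be isotopic to the identity through non-isometric diffeomorphisms; the clean argument is either your M\"obius/Fuchsian one or the harmonic-maps uniqueness argument that the paper itself uses to prove its Theorem 3.5. Second, your torus aside conflates the scaling freedom $e^{2c}g_0$ (the actual source of non-uniqueness of the flat representative) with translations (which are why the $\mathcal{D}_0$-action fails to be free there); neither affects the statement, which claims uniqueness only for $\chi(F)<0$. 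As for what each route buys: the paper's citation to the complex-analytic theory covers all genera uniformly, while your PDE proof is self-contained exactly in the cases the paper needs and gives the form of the statement (a canonical hyperbolic representative depending well on the background metric) that underlies the bundle structure $\mathcal{M}^s\to\mathcal{C}^s\cong\mathcal{M}^s_{-1}$ invoked in Section 3.
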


The general theory of harmonic maps between two $n$-dimensional manifolds was developed by Eells and Sampson\cite{eells1964harmonic}. We restrict our attention to the case of surfaces. The \textit{Dirichlet energy} of a map between two surfaces $f: (F_1, m_1) \to (F_2, m_2)$ is defined by
$$E_D(f) = \int_{F_1} ||df||^2 dA$$
where $df$ is the differential of $f$, considered as a section to the bundle $T^*F_1\otimes TF_2$ with a metric induced from $m_1$ and $m_2$. It can be regarded as the measurement of total stretching of the map $f$. A map is \textit{harmonic} if it is a critical point of the Dirichlet energy among maps in its homotopy class.

 One of the earliest results about harmonic maps in the plane is the Rado-Kneser-Choquet theorem\cite{duren2004harmonic}.
\begin{theorem}
	Suppose $\phi:\mathbb{D}\to \mathbb{R}^2$ is a harmonic map sending the boundary $\partial \mathbb{D}$ homeomorphically into the boundary $\partial\Sigma$ of some convex region $\Sigma \subset \mathbb{R}^2$. Then $\phi$ is one to one.
\end{theorem}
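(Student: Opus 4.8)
The plan is to reduce the global statement to a local one about the Jacobian and then close the argument with a degree-theoretic count of preimages. Write $\phi = (u,v)$; since $\phi$ is harmonic, each component $u$ and $v$ is a real-valued harmonic function on $\mathbb{D}$, continuous up to $\partial\mathbb{D}$. I would establish two facts: (i) the Jacobian $J_\phi = u_x v_y - u_y v_x$ never vanishes on $\mathbb{D}$, so that $\phi$ is a local diffeomorphism, and (ii) a winding-number argument then forces $\phi$ to be globally one-to-one. Step (i) is the heart of the matter and I expect it to be the main obstacle.

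For step (i), suppose for contradiction that $J_\phi(z_0) = 0$ at some interior point $z_0$. Then $\nabla u(z_0)$ and $\nabla v(z_0)$ are linearly dependent, so there is a nonzero pair $(a,b)$ with $a\nabla u(z_0) + b\nabla v(z_0) = 0$. Consider the single harmonic function $h = au + bv = \ell\circ\phi$, where $\ell(w) = aw_1 + bw_2$ is a linear functional on $\mathbb{R}^2$; by construction $\nabla h(z_0) = 0$, so $z_0$ is a critical point of $h$. Since $\partial\Sigma$ is a genuine closed convex curve, $\ell$ is nonconstant on it, hence $h$ is nonconstant. Near a critical point a nonconstant harmonic function behaves like $\mathrm{Re}\, c(z-z_0)^n$ with $n \geq 2$, so the level set $\{h = h(z_0)\}$ consists of at least four arcs crossing at $z_0$. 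Because a harmonic function has no interior local extremum, none of these arcs can close into a loop in $\mathbb{D}$ or terminate in the interior, so each must run out to $\partial\mathbb{D}$, producing at least four boundary points where $h = h(z_0)$.

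Now I bring in convexity to contradict this. By the maximum principle $c := h(z_0)$ lies strictly between $\min_{\partial\mathbb{D}} h$ and $\max_{\partial\mathbb{D}} h$. Since $\{w : \ell(w) \geq c\}$ is a half-plane and $\Sigma$ is convex, the arc $\partial\Sigma \cap \{\ell \geq c\}$ is connected, and likewise for $\{\ell \leq c\}$; transporting through the boundary homeomorphism $\phi|_{\partial\mathbb{D}}$, the sets $\{h \geq c\}$ and $\{h \leq c\}$ each meet $\partial\mathbb{D}$ in a single arc. Hence $h$ equals $c$ at exactly two points of $\partial\mathbb{D}$, contradicting the four boundary incidences forced above. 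Therefore no interior critical point of any such $h$ exists, and $J_\phi \neq 0$ throughout $\mathbb{D}$. The delicate point requiring care here is the topological bookkeeping that turns the local branching of the nodal set into a genuine count of at least four distinct boundary zeros; this is where I would spend the most effort in a complete write-up.

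For step (ii), $J_\phi$ is continuous and nowhere zero on the connected set $\mathbb{D}$, so it has constant sign, which the orientation of the boundary homeomorphism fixes as positive; thus $\phi$ is an orientation-preserving local diffeomorphism. For any $w_0$ in the interior of $\Sigma$, the winding number of the closed curve $\phi|_{\partial\mathbb{D}}$ about $w_0$ equals $1$ because $\phi|_{\partial\mathbb{D}}$ is a homeomorphism onto the convex curve $\partial\Sigma$; by the argument principle this winding number equals the number of preimages of $w_0$ counted with the sign of $J_\phi$, and since $J_\phi > 0$ this is simply the number of preimages. Hence every interior point of $\Sigma$ has exactly one preimage, points outside $\overline{\Sigma}$ have none, and $\phi$ is injective, which completes the proof.
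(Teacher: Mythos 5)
This statement is the Rado--Kneser--Choquet theorem, which the paper itself does not prove: it is quoted as classical background with a citation to Duren's book on harmonic mappings, so there is no in-paper proof to compare against. Your argument is in fact the standard proof of this theorem (Kneser's argument, essentially the one presented in the cited reference): reduce injectivity to non-vanishing of the Jacobian, rule out an interior critical point $z_0$ by noting that the harmonic function $h = \ell\circ\phi$ would have a nodal set with at least four arcs emanating from $z_0$, all forced out to $\partial\mathbb{D}$, while convexity of $\Sigma$ allows the level $\{h = c\}$ to meet the boundary circle too few times; the winding-number count in your step (ii) is then routine. Two points need care in a complete write-up. First, as you yourself flag, the four nodal arcs must be shown to produce four genuinely distinct boundary incidences; the clean way is to argue with signs rather than with the zero set: the arcs emanating from $z_0$ cut $\mathbb{D}$ into at least four sectors on which $h - c$ alternates in sign, so each of $\{h > c\}$ and $\{h < c\}$ would have to meet $\partial\mathbb{D}$ in at least two components. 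Second, your claim that $h = c$ at exactly two boundary points fails when $\partial\Sigma$ contains a straight segment lying in the line $\{\ell = c\}$ (the region is only assumed convex, not strictly convex); the sign formulation is immune to this, since convexity still gives that $\partial\Sigma\cap\{\ell > c\}$ and $\partial\Sigma\cap\{\ell < c\}$ are connected arcs, and transporting this through the boundary homeomorphism contradicts the two-components-of-each-sign conclusion. With these two repairs your proof is complete and correct, and it matches the argument in the paper's cited source.
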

When it comes to general surfaces, a fundamental question is the existence and uniqueness of harmonic maps in a given homotopy class of maps between two surfaces. Here we summarize the results proved by Jost\cite{jost1982existence}, Schoen and Yau\cite{schoen1978univalent}, Coron and Helein\cite{coron1989harmonic}, and Markovic and Mateljevic\cite{markovic1999new}.
\begin{theorem}
Given two Riemannian metrics on a surface $F$ and a diffeomorphism $f$, there exists a diffeomorphism which is a critical point of the Dirichlet energy in the homotopy class of $f$. If the genus of $F$ satisfies $g>1$, then this diffeomorphism is unique.
\end{theorem}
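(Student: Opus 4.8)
The plan is to treat the three assertions---existence of a harmonic map, its univalence, and uniqueness---in turn, keeping the target metric $m_2$ fixed and using the direct method rather than the heat flow. Fix a conformal coordinate atlas for $(F,m_1)$; since $E_D$ is conformally invariant in the domain in dimension two, harmonicity depends only on the conformal class $[m_1]$ together with the full target metric $m_2$. For existence I would take a minimizing sequence $f_n$ in the homotopy class of $f$ and extract a weak $W^{1,2}$ limit, using the lower semicontinuity of $E_D$. The borderline Sobolev exponent in dimension two means the only way smoothness or the homotopy class can be lost is through energy concentrating into a nonconstant harmonic two-sphere (the Sacks--Uhlenbeck bubbling phenomenon); but $\pi_2(F)=0$ for a surface of genus $g\ge 1$, so no such bubble can form. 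The minimizer is therefore a smooth harmonic map homotopic to $f$, for an \emph{arbitrary} target metric. (For $g=1$ one may alternatively uniformize $m_2$ to the flat representative and invoke Eells--Sampson \cite{eells1964harmonic} directly, since the flat target is nonpositively curved.)

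Second, I would show the harmonic minimizer is a diffeomorphism. Working in a local conformal coordinate on the domain, form the Jacobian $J$, which is a positive multiple of $|\partial f|^2-|\bar\partial f|^2$; harmonicity forces the Hopf differential of $f$ to be a holomorphic quadratic differential, which in turn makes the zeros of $J$ isolated and of computable (negative) index. A degree count, using that $f$ is homotopic to a diffeomorphism and hence has degree one, then forces $J>0$ everywhere, so $f$ is a local diffeomorphism; a local diffeomorphism between closed surfaces is a covering map, and a degree-one covering is a global diffeomorphism. This univalence step is the heart of the matter, and I expect it to be the main obstacle: controlling the sign of $J$ at its zeros is where the geometry of the target and the \emph{minimizing} property (not merely criticality, which alone permits folds) must be used. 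The original argument of Schoen--Yau \cite{schoen1978univalent} assumes nonpositive target curvature, and the extension to a general target metric---so that the energy minimizer homotopic to a diffeomorphism is itself a diffeomorphism---is exactly the content supplied by Jost \cite{jost1982existence}, Coron--Helein \cite{coron1989harmonic}, and Markovic--Mateljevic \cite{markovic1999new}, going beyond the convex-boundary planar situation of the Rado--Kneser--Choquet theorem.

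Finally, for uniqueness when $g>1$ I would exploit the convexity of the energy along geodesic homotopies. Given two harmonic maps in the same homotopy class, connect them by moving each point along the target geodesic joining its two images; when the target curvature is nonpositive this family has convex energy, and strict negativity forces the two maps to coincide, since a diffeomorphism is surjective and so cannot degenerate onto a point or a closed geodesic (the Al'ber--Hartman uniqueness principle). The delicate point is that the \emph{given} metric $m_2$ need not be negatively curved, so this argument does not apply to it verbatim; one resolves it by passing to the hyperbolic representative of the conformal class of $m_2$ furnished by the Uniformization Theorem, which exists and is unique precisely because $\chi(F)<0$ when $g>1$. The genus hypothesis $g>1$ therefore enters exactly through the negativity of the Euler characteristic, which is what both the hyperbolic normalization and the strict convexity of the energy require.
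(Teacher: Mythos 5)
The paper itself offers no proof of this statement: it appears as a summary theorem explicitly attributed to Jost, Schoen--Yau, Coron--H\'elein, and Markovi\'c--Mateljevi\'c, so your proposal can only be measured against the cited literature rather than against an argument in the text. With that said, your existence step is sound in outline (direct method, lower semicontinuity, and exclusion of bubbling because $\pi_2(F)=0$ for $g\ge 1$), and you correctly identify univalence as the crux and defer it to the cited papers, which is fair.

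The genuine gap is in your uniqueness step. The proposed fix---``pass to the hyperbolic representative of the conformal class of $m_2$''---fails, because the conformal invariance of the two-dimensional Dirichlet energy holds only in the \emph{domain} variable, not in the target. Replacing the target metric $m_2$ by the conformally equivalent hyperbolic metric $\bar m_2$ changes the energy functional, hence the Euler--Lagrange equation, hence the set of harmonic maps: a map that is harmonic into $(F,m_2)$ is in general not harmonic into $(F,\bar m_2)$ (already the identity $(F,\bar m_2)\to(F,m_2)$ is typically non-harmonic). So the Al'ber--Hartman convexity argument applied to the target $\bar m_2$ gives uniqueness of a \emph{different} harmonic map and says nothing about the harmonic diffeomorphism into the given target $(F,m_2)$, which is what the theorem asserts is unique. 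Uniqueness for an arbitrary target metric is precisely the nontrivial content of Coron--H\'elein and Markovi\'c--Mateljevi\'c; their arguments run through the holomorphicity of the Hopf differential and a Reich--Strebel-type main inequality associated with the hyperbolic structure in the conformal class of the \emph{domain}, not through geodesic-homotopy convexity in the target. As written, your argument establishes uniqueness only in the special case where $m_2$ itself is nonpositively curved; the genus hypothesis $g>1$ cannot be converted into negative curvature of the actual target metric, which is exactly why this case is hard.
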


Tons of results have been developed to compute conformal maps and harmonic maps between surfaces \cite{wu2020computing, sun2015discrete}. Among these results, the theory of discrete conformal geometry including circle packings and vertex scaling provide solid mathematical foundations and effective algorithms to compute discrete conformal maps between polyehedral surfaces \cite{stephenson2005introduction, gu2018discrete, gu2018discrete2, gu2019convergence, luo2020discrete, wu2015rigidity, wu2014finiteness, wu2020convergence}. 
\subsection{Quasiconformal maps and the Teichm\"uller maps}
Quasiconformal maps provide a generalization of conformal maps between surfaces, arising naturally when we want to compare two conformal structures on a surface. Let $f: D\to \mathbb{C}$ be an orientation preserving diffeomorphism from a region $D$ in $\mathbb{C}$. We can consider the \textit{Beltrami coefficient}
$$\mu_f(z) = \frac{f_{\bar{z}}}{f_z}. $$
If $f$ is conformal, then $f_{\bar{z}} = 0$ so $\mu_f  = 0$. The Jacobian of $f$ is given by $J(f) = |f_z|^2 - |f_{\bar{z}}|^2$ which is positive by assumption. Hence $|\mu_f|$ varies from $0$ to $1$, measuring the deviation of $f$ from a conformal map. An alternative quantity $K$ varying from $1$ to $\infty$, called the \textit{dilatation}, is defined by
$$K_f(z) =  \frac{1 + |\mu_f|}{1 - |\mu_f|}.$$
Geometrically, at each point $z$ in $D$, $df$ maps circles in $T_pD$ to ellipses in $T_{f(p)}\mathbb{C} = \mathbb{C}$. The dilatation $K_f(z)$ is the ratio of the major axis to the minor axis of the ellipse. Then we call the map $f$  a \textit{$K$-quasiconformal map} if there exists a $K>0$ such that
$$\sup_{z\in D} K_f(z) = \sup_{z\in D} \frac{1 + |\mu_f(z)|}{1 - |\mu_f(z)|} \leq K. $$
The composition of a $K_1$-quasiconformal map with a $K_2$-quasiconformal is a $K_1K_2$-quasiconformal map. Quasiconformal maps can be generalized further to non-differentiable maps using several mutually equivalent geometric and measure-theoretic definitions \cite{imayoshi2012introduction}.

\begin{figure}
\begin{center}
\begin{tikzpicture}
\draw [->] (0,-1.2) -- (0,1.2);
\draw [->] (-1.2,0) -- (1.2,0);
\draw [red] (0,0) circle [radius=0.8];
\draw [blue, rotate=45] (0,0) ellipse (36pt and 16pt);
\draw [->] (0,0) -- (0.9, 0.9);
\draw [->] (0,0) -- (-0.4, 0.4);
\node  at (1.4, 1.2) {$1 + |\mu|$};
\node  at (-0.9, 0.9) {$1 - |\mu|$};
\node  at (1, -1) {$K = \frac{1 + |\mu|}{1 - |\mu|}$};

\end{tikzpicture}
\caption{Quasiconformal maps}
\end{center}
\end{figure}
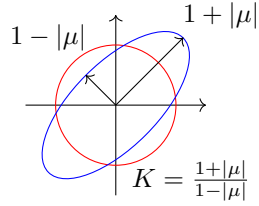

Every quasiconformal map $f: D \to \mathbb{C}$ gives rise to a Beltrami coefficient $\mu_f(z)$ defined on $D$. A remarkable theorem proved by Ahlfors and Bers (see, e.g.\cite{farb2011primer}) states the converse is also true.
\begin{theorem}
 If $\mu \in L^{\infty}(\mathbb{C})$ and $||\mu||_{\infty} < 1$, there exists a unique quasiconformal  homeomorphism $f: \hat{\mathbb{C}} \to \hat{\mathbb{C}}$ fixing $0$, $1$, and $\infty$, satisfying  $\mu = \mu_f$ almost everywhere.
\end{theorem}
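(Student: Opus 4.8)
The plan is to prove existence through the classical Cauchy--Beurling integral-equation method and to obtain uniqueness from the fact that a $1$-quasiconformal map is conformal. This is the measurable Riemann mapping theorem, and its analytic heart is the $L^p$ theory of a single singular integral operator.

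First I would reduce to the case where $\mu$ is smooth and compactly supported, seeking a solution of the Beltrami equation $f_{\bar z} = \mu f_z$ in the form $f(z) = z + (Ph)(z)$. Here $P$ is the Cauchy transform $(Pg)(z) = -\frac{1}{\pi}\int_{\mathbb{C}} \frac{g(w)}{w-z}\,dA(w)$ and $h$ is the unknown. The Cauchy transform satisfies $\partial_{\bar z}(Ph) = h$ and $\partial_z(Ph) = Th$, where $T$ is the Beurling--Ahlfors transform, the principal-value singular integral $(Tg)(z) = -\frac{1}{\pi}\,\mathrm{p.v.}\int_{\mathbb{C}} \frac{g(w)}{(w-z)^2}\,dA(w)$. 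Substituting $f_{\bar z} = h$ and $f_z = 1 + Th$ converts the Beltrami equation into the fixed-point equation $(I - \mu T)h = \mu$.

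The central step is to invert $I - \mu T$ by the Neumann series $h = \sum_{n\geq 0}(\mu T)^n\mu$. The key input is Calderón--Zygmund theory: $T$ is an isometry on $L^2(\mathbb{C})$ and extends to a bounded operator on $L^p(\mathbb{C})$ for every $1 < p < \infty$, with operator norm depending continuously on $p$ and equal to $1$ at $p = 2$. Since $\|\mu\|_\infty = k < 1$, I can choose $p > 2$ close enough to $2$ that $k\,\|T\|_{L^p \to L^p} < 1$, so the series converges in $L^p$ and yields $h \in L^p$. Then $f = z + Ph$ lies in $W^{1,p}_{\mathrm{loc}}$, and because $p > 2$ the Sobolev embedding makes $f$ Hölder continuous; one then verifies it is an orientation-preserving quasiconformal homeomorphism of $\hat{\mathbb{C}}$ solving the Beltrami equation.

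For general $\mu \in L^\infty$ with $\|\mu\|_\infty = k < 1$, I would approximate by smooth compactly supported $\mu_n \to \mu$ almost everywhere with $\|\mu_n\|_\infty \le k$, take the normalized solutions $f_n$ fixing $0, 1, \infty$, and invoke compactness of the family of $K$-quasiconformal maps (a normal family, since all have dilatation $\le K = (1+k)/(1-k)$) to extract a locally uniformly convergent subsequence whose limit $f$ is $K$-quasiconformal with $\mu_f = \mu$ almost everywhere. Uniqueness then follows because, given two normalized solutions $f_1, f_2$, the composition $f_1 \circ f_2^{-1}$ has Beltrami coefficient $0$, hence is $1$-quasiconformal and therefore a Möbius transformation; fixing $0, 1, \infty$ forces it to be the identity. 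The hard part is the analytic core of the third step: controlling $\|T\|_{L^p \to L^p}$ near $p = 2$ and then passing rigorously from the $L^p$ solution of the integral equation to a bona fide quasiconformal homeomorphism, i.e. establishing the regularity and univalence of $f$.
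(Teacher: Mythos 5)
The paper does not prove this statement at all: it is quoted as the Ahlfors--Bers measurable Riemann mapping theorem, with a pointer to the literature (Farb--Margalit), and is used as a black box in the discussion of quasiconformal maps. So there is no proof in the paper to compare yours against; what you have written is the standard classical proof (Bojarski's argument, as in Ahlfors' \emph{Lectures on Quasiconformal Mappings} or Astala--Iwaniec--Martin), and as an outline it is essentially correct: the reduction of the Beltrami equation to $(I-\mu T)h=\mu$, the Neumann series inversion using that $\|T\|_{L^p\to L^p}$ is continuous in $p$ (Calder\'on--Zygmund plus Riesz--Thorin) and equals $1$ at $p=2$, the Sobolev embedding for $p>2$, the normal-family argument for general $\mu$, and uniqueness via Weyl's lemma and the three-point normalization are exactly the right ingredients. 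Three places where your sketch hides real work are worth naming. First, $f=z+Ph$ only fixes $\infty$; to fix $0$ and $1$ you must post-compose with an affine map, which is harmless since it leaves the Beltrami coefficient unchanged. Second, in the approximation step, locally uniform convergence of $K$-quasiconformal maps does \emph{not} by itself give $\mu_f=\mu$: Beltrami coefficients are not continuous under uniform limits. The correct argument uses the uniform $W^{1,p}_{\mathrm{loc}}$ bounds to extract weak limits of $\partial f_n$ and $\bar\partial f_n$, identifies them with $\partial f$ and $\bar\partial f$, and passes $\bar\partial f_n=\mu_n\,\partial f_n$ to the limit by pairing against test functions and using dominated convergence on $(\mu_n-\mu)$; only then does $\bar\partial f=\mu\,\partial f$ hold weakly. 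Third, univalence and surjectivity of the solution in the smooth compactly supported case (via positivity of the Jacobian, properness, and a degree argument, or via constructing an inverse from the conjugate Beltrami equation) is genuinely the hard analytic step, which you correctly flag rather than claim. With those points filled in, your proposal is a complete and faithful rendition of the classical proof of the theorem the paper cites.
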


Since the composition of quasiconformal maps with conformal maps is again quasiconformal with the same maximal dilatation, we can define quasiconformal maps $f: F_1 \to F_2 $ between Riemann surfaces using local charts. Then the Beltrami coefficient is a $(-1, 1)$-form $\mu d\bar{z}/dz$ instead of a function,  but $|\mu|$ is well-defined on the surface. We can define the corresponding dilatation of a map $f$ 
$$K_f = \sup_{p \in F} \frac{1 + |\mu_f(p)|}{1 - |\mu_f(p)|}. $$
This quantity measures the difference between two conformal structures, or equivalently, two hyperbolic structures for higher genus surfaces.  We have the following extremal problem in a given homotopy class: find a map $f_0$ achieving this infimum of the dilatation in a homotopy class satisfying
$$\ K_{f_0} = \inf \{K_f| f \text{ in a given homotopy class}\}.$$  
This map is called an \textit{extremal quasiconformal map} in the given homotopy class between two Riemann surfaces . For surfaces $F_g$ with genus $g>1$, the extremal quasiconformal map in certain special coordinates is locally an affine map except for some singularities, called the \textit{Teichm\"uller
 map}. The fundamental theorem about a Teichm\"uller
 map is the Teichm\"uller's theorem (see e.g.\cite{farb2011primer}).
\begin{theorem}
There exists a unique Teichm\"uller map in every homotopy class of homeomorphisms of $F_g$ with $g>1$ between two conformal structures on $F_g$.
\end{theorem}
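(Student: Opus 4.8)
The plan is to realize Teichm\"uller's theorem as the statement that a natural map from the space of ``Teichm\"uller Beltrami differentials'' onto the Teichm\"uller space is a homeomorphism, following the classical route via invariance of domain. Fix the source structure $X = (F_g, m_1)$. By the Riemann-Roch theorem the space $Q(X)$ of holomorphic quadratic differentials on $X$ has complex dimension $3g-3$; the set of normalized Teichm\"uller differentials $k\,\bar{\phi}/|\phi|$, parametrized by $\phi \in Q(X)\setminus\{0\}$ modulo positive real scaling together with $k \in [0,1)$, is then an open ball $B$ of real dimension $6g-6$. To each such $(k,\phi)$ I would associate the quasiconformal map $f_{k,\phi}$ solving the Beltrami equation with coefficient $k\bar{\phi}/|\phi|$ (the Ahlfors-Bers theorem quoted above, transported to $F_g$ via the natural charts), which is affine in the natural coordinate $\zeta = \int \sqrt{\phi}$ away from the zeros of $\phi$, and record the resulting target structure together with the homotopy class of $f_{k,\phi}$. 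This defines a map $\Phi: B \to T(F_g)$, continuous because the solution of the Beltrami equation depends continuously on its coefficient.

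The analytic heart of the argument is the injectivity of $\Phi$, which is Teichm\"uller's uniqueness statement: if a quasiconformal map $g$ is homotopic to the Teichm\"uller map $f_{k,\phi}$, then $K_g \geq K_{f_{k,\phi}}$, with equality only if $g = f_{k,\phi}$. I would prove this through the Reich-Strebel main inequality, obtained by a length-area estimate along the horizontal trajectory foliation of $\phi$: the horizontal trajectories are stretched by the factor $K = (1+k)/(1-k)$ under $f_{k,\phi}$, while any competing map must, after averaging trajectory lengths against the area form $|\phi|$, distort them at least as much, which forces the dilatation bound. Distinct differentials therefore produce distinct, non-homotopic extremal maps, so $\Phi$ is injective.

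With injectivity in hand I would finish topologically. The tangent space to $T(F_g)$ at $X$ is canonically identified with the dual of $Q(X)$, so $T(F_g)$ is a manifold of the same real dimension $6g-6$ as $B$. By Brouwer's invariance of domain a continuous injection between manifolds of equal dimension is open, so $\Phi(B)$ is open. To see it is also closed I would use a properness argument: as $(k,\phi)$ approaches the boundary of $B$, that is $k \to 1$, the dilatation $K \to \infty$ and hence the Teichm\"uller distance $\tfrac{1}{2}\log K$ to $X$ tends to infinity, so $\Phi$ is proper and its image is closed. Since $T(F_g)$ is connected, $\Phi$ is a bijection: every homotopy class contains exactly one Teichm\"uller map, and the uniqueness estimate shows it is the unique extremal quasiconformal map in its class.

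The step I expect to be the main obstacle is the Reich-Strebel main inequality underlying injectivity. The delicate points are that the competing map $g$ is only quasiconformal, hence absolutely continuous on lines and differentiable almost everywhere rather than smooth, so the length-area computation must be justified by Fubini along the trajectory foliation together with the absolute continuity properties of quasiconformal maps; and that $\phi$ has finitely many zeros, near which the coordinate $\zeta = \int\sqrt{\phi}$ degenerates and the foliation is singular, requiring a separate local analysis to confirm that these measure-zero singular sets do not affect the global estimate. The properness used for surjectivity is the secondary technical point, requiring a quantitative lower bound on the Teichm\"uller distance in terms of $k$.
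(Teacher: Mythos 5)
The paper does not prove this statement: it is quoted as background (Theorem 2.5) with a citation to Farb and Margalit's primer, so there is no internal proof to compare against. Your outline reproduces, correctly, the classical Bers-style proof found in that reference: Teichm\"uller uniqueness via the length--area (Reich--Strebel) estimate gives injectivity of the map $\Phi$ from the open unit ball of Teichm\"uller Beltrami coefficients into Teichm\"uller space, and existence then follows from invariance of domain, properness, and connectedness. Two caveats on rigor. First, the fact that Teichm\"uller space is a manifold of real dimension $6g-6$ must be established independently, e.g.\ via Fenchel--Nielsen coordinates, which give a homeomorphism with $\mathbb{R}^{6g-6}$; inferring it from ``the tangent space is the dual of $Q(X)$'' is circular, since that identification already presupposes a smooth structure on Teichm\"uller space. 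Second, your properness argument needs the lower bound $d_T\bigl(X,\Phi(k,\phi)\bigr) = \frac{1}{2}\log\frac{1+k}{1-k}$, which is precisely the extremality of the Teichm\"uller map, i.e.\ the uniqueness theorem; you flag this yourself, but it makes the logical order essential: uniqueness must be proved before the surjectivity step, as your plan indeed arranges.
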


\section{The Space of Shapes} 

We need to define rigorously the space of ``shapes" before constructing metrics on it. Various notions of shape spaces of curves and surfaces in $\mathbb{R}^2$ or $\mathbb{R}^3$ have been formulated from different perspectives with applications in computational geometry and computer graphics. An overview of various notions about shapes is given by Bauer, Bruveris and Minchor\cite{bauer2014overview}.

In this paper, we will introduce the space of shapes on surfaces from an intrinsic point of view. The idea uses the work by Ebin\cite{ebin1967space}, Fischer and Tromba\cite{fischer1984purely, tromba2012teichmuller}, Earle and Eells\cite{earle1969fibre}. We will summarize their work, define the shape space of a surface and complete the picture of its connection with the Teichm\"uller space. 

\subsection{Space of Riemannian metrics and its quotients} 
From the intrinsic viewpoint, the natural space to consider is the space of all smooth metric tensors on a given surface $F$, denoted by $\mathcal{M}$. Let $TF$ and $T^{*}F$ be the tangent and cotangent bundle, then a metric tensor is a section of $S^2T^{*}F$, the bundle of all symmetric (0,2)-type tensors. Since a metric tenor is positive definite, all metric tensors on $F$ form a convex subset of the infinite-dimensional vector space of sections of symmetric 2-tensors, denoted by $\Gamma(S^2T^{*}F)$. 

The tangent space at any element of $\mathcal{M}$, being a subset of a vector space, is naturally isomorphic to $\Gamma(S^2T^{*}F)$. In the tangent space at $g$ in $\mathcal{M}$, there is a natural inner product induced by $g$ on arbitrary tensor fields, defined as
$$(h, k)_{g} = \int_F tr_g(hk)dvol_g$$ 
where $h$ and $k$ are in $\Gamma(S^2T^{*}F)$ identified with the tangent space at $g$ and $dvol_g$ is the volume form. In local coordinates, they are represented by
$$tr_g(hk) = g^{ij}g^{lm}h_{il}k_{jm} \quad \text{   and   } \quad dvol_g = \sqrt{det (g)}dx_1dx_2.$$ 

Clarke\cite{clarke2010metric} explored the basic properties of this metric, showing that this metric, originally defined as a weak Riemannian metric, was indeed a metric. Furthermore, it coincides with the Weil-Petersson metric when restricted to the Teichm\"uller
 space.

It is hard to compute the natural $L^2$ metric defined above on the space $\mathcal{M}$. Besides, $\mathcal{M}$ contains redundant information: two metric tensors $h$ and $k$ may describe the isometric surface with different parametrizations. Therefore we would like to simplify the definition of the space of shapes for a given surface $F$, as the quotient of $\mathcal{M}$ by certain groups acting on $\mathcal{M}$.

There are three topological groups acting naturally on the space of metrics: the space $\mathcal{P}$ of all smooth functions on surface $F$, $\mathcal{D}$ the orientation-preserving diffeomorphism group of $F$ and its normal subgroup $\mathcal{D}_0$, the group of diffeomorphisms isotopic to the identity. The group $\mathcal{D}$  acts on $\mathcal{M}$ as isometries by pull-back
$$ \mathcal{D}\times\mathcal{M}\to\mathcal{M} \quad (f, g)\to f^*g.$$
The action of $\mathcal{D}_0$ is its restriction. The action of $\mathcal{P}$ on $\mathcal{M}$ is the multiplication of positive functions with metric tensors
$$ \mathcal{P}\times\mathcal{M}\to\mathcal{M} \quad (u,g)\to e^ug.$$

When we consider the two group actions above, an immediate question is whether we have a bundle structure. The natural topology for $\mathcal{M}$, $\mathcal{D}$ and $\mathcal{P}$ is the smooth Frechet topology, which means that two metrics are close if all the coefficients and their derivatives are close under the supremum norm in every chart. The implicit function theorem and its consequences are not true in general for this topology. Hence in Ebin\cite{ebin1967space} and Fischer\cite{fischer1977manifold, fischer1984purely}, $\mathcal{M}$, $\mathcal{D}$, and $\mathcal{P}$ are modelled in the corresponding Sobolev spaces.  These spaces contain maps which have square integrable partial derivatives up to sufficiently large order $s>1$ in every local charts, denoted by $\mathcal{M}^s$, $\mathcal{D}^{s+1}$, and $\mathcal{P}^s$ respectively. 

The space $\mathcal{M}^s$ forms an open convex subset in the Hilbert space $\Gamma^s(S^2T^{*}F)$, hence a Hilbert manifold. The space $\mathcal{P}^s$ corresponds to the Sobolev space $H^s(F,\mathbf{R})$. Then the multiplication and inverse are continuous, hence $\mathcal{P}^s$ is an abelian Hilbert Lie group. Ebin\cite{ebin1967space} proved that $\mathcal{D}^{s+1}$ was also a Hilbert Lie group. Then we can apply the following theorem in \cite{fischer1984purely} for the action of a Hilbert Lie group on an infinite-dimensional manifold, which will induce a smooth structure on the shape space, the space of pointwise conformal classes and the Teichm\"uller
 space.

\begin{theorem}
	Let a smooth Hilbert Lie group $\mathcal{G}$ act on a smooth Hilbert manifold $\mathcal{N}$. If the action is smooth, proper, and free, then:
	\begin{itemize}
		\item For all $x \in \mathcal{N}$, the orbit of $x$ by $\mathcal{G}$, denoted by $\mathcal{G}_x$, is a closed smooth submanifold in $\mathcal{N}$;
		\item The quotient space $\mathcal{N}/\mathcal{G}$ is a smooth manifold;
		\item The quotient map $\pi: \mathcal{N}\to\mathcal{N}/\mathcal{G}$ is a smooth submersion. It has the structure of a smooth principle fibre bundle.
	\end{itemize}
\end{theorem}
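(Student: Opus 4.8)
The plan is to reduce all three assertions to a single construction: a \emph{slice} transverse to each orbit, from which the submanifold property, the quotient manifold structure, and the principal bundle structure all follow. I would work at a fixed point $x \in \mathcal{N}$ and exploit the homogeneity provided by the group action to transport local statements along the orbit.

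First I would identify the infinitesimal data. Writing $\mathfrak{g}$ for the Lie algebra of $\mathcal{G}$, the orbit map $o_x: \mathcal{G}\to\mathcal{N}$, $g\mapsto g\cdot x$, has derivative at the identity the infinitesimal action $\alpha_x: \mathfrak{g}\to T_x\mathcal{N}$, $\xi\mapsto \frac{d}{dt}\big|_{t=0}\exp(t\xi)\cdot x$. Freeness forces $\alpha_x$ to be injective: if $\alpha_x(\xi)=0$, then $x$ is a stationary point of the flow $y\mapsto \exp(t\xi)\cdot y$, so $\exp(t\xi)$ lies in the (trivial) isotropy group for all $t$, whence $\xi=0$. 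Because $T_x\mathcal{N}$ is a Hilbert space, the decisive analytic point is to show that the image $\alpha_x(\mathfrak{g})$ is \emph{closed}; once this is known, it is automatically complemented by its orthogonal complement $N_x$. I would then define a slice $S_x$ as $\phi^{-1}(N_x \cap B_\epsilon)$ for a chart $\phi$ of $\mathcal{N}$ centred at $x$ with $d\phi_x = \mathrm{id}$, so that $T_xS_x = N_x$ and $S_x$ meets the orbit transversally at $x$.

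Next I would run the inverse function theorem, which is valid in the Hilbert category precisely because the spaces are modelled on Hilbert spaces rather than Fréchet spaces (this is the reason for passing to the Sobolev completions $\mathcal{M}^s$, $\mathcal{D}^{s+1}$, $\mathcal{P}^s$ earlier). Consider the tube map $\Phi: \mathcal{G}\times S_x \to \mathcal{N}$, $(g,s)\mapsto g\cdot s$. Its differential at $(e,x)$ is $(\xi,v)\mapsto \alpha_x(\xi)+v$, which by the splitting $T_x\mathcal{N}=\alpha_x(\mathfrak{g})\oplus N_x$ is a linear isomorphism; hence $\Phi$ is a local diffeomorphism near $(e,x)$, and by equivariance near every $(g,x)$. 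The remaining step is to upgrade this to \emph{global} injectivity of $\Phi$ on $\mathcal{G}\times S_x'$ for a possibly smaller slice $S_x'$: here properness is used in the standard way to rule out two nearby slice points lying on the same orbit through group elements escaping to infinity. This produces an equivariant tubular neighbourhood $\Phi: \mathcal{G}\times S_x' \xrightarrow{\ \cong\ } U$ of the orbit.

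From the tube all three conclusions follow. Restricting $\Phi$ to $\mathcal{G}\times\{x\}$ shows $o_x$ is an embedding and, combined with the closedness of orbits under a proper action, that $\mathcal{G}_x$ is a closed smooth submanifold. The slices $S_x'$ serve as charts for $\mathcal{N}/\mathcal{G}$: the composite $S_x'\hookrightarrow\mathcal{N}\xrightarrow{\pi}\mathcal{N}/\mathcal{G}$ is a bijection onto an open set, and transition maps between overlapping slice charts are smooth because they are read off from the smooth map $\Phi$, so $\mathcal{N}/\mathcal{G}$ is a smooth Hilbert manifold. Finally, $\pi$ is a submersion with local sections given by the slices, and $\Phi$ itself is the local trivialization exhibiting $\pi:\mathcal{N}\to\mathcal{N}/\mathcal{G}$ as a smooth principal $\mathcal{G}$-bundle. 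I expect the main obstacle to be the analytic input in the second paragraph, namely verifying that $\alpha_x$ has closed split range, since in the intended applications this is exactly the place where ellipticity of the relevant operators, and not merely the abstract group action, must be invoked.
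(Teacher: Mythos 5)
The paper itself contains no proof of this statement: it is imported verbatim as a tool from Fischer and Tromba \cite{fischer1984purely}, so there is no internal argument to compare against. Your slice-theorem outline is in fact the standard route by which the cited literature (Palais in finite dimensions, Ebin and Fischer--Tromba in the Hilbert setting) establishes such results, and its architecture is right: infinitesimal injectivity from freeness (a zero of the fundamental vector field is a fixed point of its flow, so freeness kills $\xi$), the tube map $\Phi(g,s)=g\cdot s$, the inverse function theorem (legitimate in the Hilbert category), properness to promote local to global injectivity of $\Phi$ on a shrunken slice, and slices serving simultaneously as charts for $\mathcal{N}/\mathcal{G}$ and as local trivializations of the principal bundle. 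One small omission: Hausdorffness of $\mathcal{N}/\mathcal{G}$, which is part of being a manifold; it follows from properness because the graph of the orbit equivalence relation is closed.

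There is, however, a genuine gap exactly where you predicted one: the closed, split range of $\alpha_x:\mathfrak{g}\to T_x\mathcal{N}$. From smoothness, freeness and properness alone this does not follow in infinite dimensions. Properness constrains sequences $g_n$ with $x_n\to x$ and $g_n\cdot x_n\to y$, but it yields no lower bound on $\alpha_x$; an injective bounded operator between Hilbert spaces can have dense, non-closed range, and nothing in your argument excludes this for $\alpha_x$ (the natural attempt---take $\lvert\xi_n\rvert=1$ with $\alpha_x(\xi_n)\to 0$ and try to contradict properness via $\exp(\xi_n)\cdot x$---fails because a small initial velocity does not control where the integral curve is at time $1$). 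Without the splitting $T_x\mathcal{N}=\alpha_x(\mathfrak{g})\oplus N_x$, the differential of $\Phi$ at $(e,x)$ is not an isomorphism, the inverse function theorem cannot be launched, and every subsequent conclusion is blocked. The statement is legitimate in this paper only because, for the concrete actions it is applied to ($\mathcal{P}^s$ and $\mathcal{D}_0^{s+1}$ acting on $\mathcal{M}^s$), the closed split range is supplied by elliptic operator theory---e.g.\ the Berger--Ebin decomposition of symmetric $2$-tensors---which is precisely what the slice constructions of Ebin and Fischer--Tromba verify. So your proposal is the correct skeleton, but as a proof of the abstract statement it is incomplete: either the closed-split-range (slice) condition must be added as a hypothesis, or it must be verified case by case, which is what the cited source actually does.
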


Fischer and Tromba\cite{fischer1984purely} considered the action of $\mathcal{P}^s$ on $\mathcal{M}^s$, where two metrics were in the same orbit if they differed by a factor $u\in \mathcal{P}^s$, namely they were pointwise conformal to each other. The quotient manifold of this group action on $\mathcal{M}^s$ is the space of pointwise conformal structures on $F$, denoted by $\mathcal{C}^s$. By Theorem 3.1 above, they clarified the differential structure for $\mathcal{C}^s$ in \cite{fischer1984purely}.

\begin{theorem}
	The group action $P: \mathcal{P}^s\times\mathcal{M}^s\to\mathcal{M}^s$ is smooth, free, and proper. The quotient space $\mathcal{C}^s = \mathcal{M}^s/\mathcal{P}^s$ by the quotient map $\pi: \mathcal{M}^s \to  \mathcal{M}^s/\mathcal{P}^s = \mathcal{C}^s$, is a contractible smooth Hilbert manifold, and $(\mathcal{M}^s, \mathcal{C}^s, \pi)$ has the structure of a trivial principle fiber bundle with structure group $\mathcal{P}^s$. The orbit $\mathcal{P}^s g$ for any $g$ is a closed smooth submanifold diffeomorphic to  $\mathcal{P}^s$. 
\end{theorem}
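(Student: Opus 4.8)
The plan is to verify the three hypotheses of Theorem 3.1 for the action $P(u,g)=e^{u}g$ and then to sharpen its abstract output---that $\pi$ is a principal $\mathcal{P}^{s}$-bundle over a smooth Hilbert manifold $\mathcal{C}^{s}$---into the two finer assertions, triviality of the bundle and contractibility of $\mathcal{C}^{s}$, by using the feature special to surfaces that the conformal factor can be recovered explicitly from the volume form. The analytic engine throughout is that for $s>1=\tfrac{1}{2}\dim F$ the Sobolev space $H^{s}(F,\mathbb{R})$ is a Banach algebra embedding continuously in $C^{0}(F)$; this makes pointwise products, the exponential, the logarithm of a function bounded away from zero, and division by a positive function all into smooth operations on $\mathcal{P}^{s}$ and on the coefficients of metrics in $\mathcal{M}^{s}$.

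Smoothness and freeness are the routine steps. The map $(u,g)\mapsto e^{u}g$ is the composition of the smooth exponential $\mathcal{P}^{s}\to\mathcal{P}^{s}$ with the continuous bilinear multiplication $\mathcal{P}^{s}\times\Gamma^{s}(S^{2}T^{*}F)\to\Gamma^{s}(S^{2}T^{*}F)$, both furnished by the algebra property, so $P$ is smooth. Freeness is pointwise: if $e^{u}g=g$ then, $g$ being positive definite and nowhere vanishing, $e^{u(p)}=1$ at every $p$, whence $u\equiv 0$. For properness I would use the two-dimensional identity $dvol_{e^{u}g}=e^{u}\,dvol_{g}$, that is $e^{u}=dvol_{e^{u}g}/dvol_{g}$, which supplies a smooth recovery of the conformal factor: if $g_{n}\to g$ and $e^{u_{n}}g_{n}\to h$ in $\mathcal{M}^{s}$, then $e^{u_{n}}=dvol_{e^{u_{n}}g_{n}}/dvol_{g_{n}}\to dvol_{h}/dvol_{g}$ in $H^{s}$, and taking logarithms shows $u_{n}$ converges, giving properness. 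The same identity shows that the orbit map $u\mapsto e^{u}g$ is a smooth bijection onto $\mathcal{P}^{s}g$ with smooth inverse $h\mapsto\log(dvol_{h}/dvol_{g})$, hence a diffeomorphism $\mathcal{P}^{s}\cong\mathcal{P}^{s}g$, and that the orbit is closed.

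With the three hypotheses in hand, Theorem 3.1 already gives that $\mathcal{C}^{s}$ is a smooth Hilbert manifold and that $(\mathcal{M}^{s},\mathcal{C}^{s},\pi)$ is a principal $\mathcal{P}^{s}$-bundle with closed orbits. To prove triviality I would exhibit an explicit equivariant global trivialization. Fixing a reference density $\omega_{0}$ on $F$, define $\Phi:\mathcal{M}^{s}\to\mathcal{C}^{s}\times\mathcal{P}^{s}$ by $\Phi(g)=\big(\pi(e^{-v}g),\,v\big)$ where $v=\log(dvol_{g}/\omega_{0})$; here $e^{-v}g$ has volume form exactly $\omega_{0}$ and is the distinguished representative of its class. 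Since $dvol_{e^{w}g}=e^{w}\,dvol_{g}$, replacing $g$ by $e^{w}g$ sends $v$ to $v+w$ while leaving $\pi(e^{-v}g)$ unchanged, so $\Phi$ is $\mathcal{P}^{s}$-equivariant; its inverse sends $([h],w)$ to $e^{w}$ times the $\omega_{0}$-normalized representative of $[h]$, and both maps are smooth by the algebra property. Thus $\mathcal{M}^{s}\cong\mathcal{C}^{s}\times\mathcal{P}^{s}$ as principal bundles. Contractibility of $\mathcal{C}^{s}$ follows at once: $\mathcal{M}^{s}$ is a convex open subset of a vector space and $\mathcal{P}^{s}=H^{s}(F,\mathbb{R})$ is a vector space, both contractible, so the factor $\mathcal{C}^{s}\simeq\mathcal{M}^{s}$ is contractible too.

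The main obstacle is the properness step, which for actions of infinite-dimensional Lie groups is ordinarily the delicate point; here it is tamed by the fact that in dimension two the conformal factor equals the ratio of volume forms, so that properness, closedness of the orbits, and the global trivialization all reduce to one explicit computation. What remains is analytic bookkeeping---checking that $\det g$, its square root, logarithms and reciprocals stay in $H^{s}$ and vary smoothly with $g$---which is exactly what the $s>1$ Banach-algebra and $C^{0}$-embedding properties guarantee.
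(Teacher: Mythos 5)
Your argument is correct, but there is nothing in the paper to compare it against: the paper does not prove this theorem. It is stated as a quoted result of Fischer and Tromba \cite{fischer1984purely}, with the manifold structure on $\mathcal{C}^s$ attributed to an application of Theorem 3.1. What you have written is therefore a reconstruction of the cited proof, and it follows what is essentially the original route: check that the action $(u,g)\mapsto e^u g$ is smooth (Banach-algebra property of $H^s$ for $s>1=\tfrac{1}{2}\dim F$), free (pointwise positivity of $g$), and proper; invoke Theorem 3.1; and then obtain triviality from the dimension-two identity $dvol_{e^u g}=e^u\,dvol_g$, which lets you recover the conformal factor as a ratio of volume forms. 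That single identity does all the real work in your write-up --- properness, closedness of orbits, the diffeomorphism $\mathcal{P}^s\cong\mathcal{P}^s g$, and the explicit equivariant trivialization by volume-normalized representatives --- and each of these steps is sound; the contractibility deduction (a factor of a contractible product whose complementary factor is contractible, $\mathcal{M}^s$ being convex and $\mathcal{P}^s$ a vector space) is also fine. Two points are glossed but standard: smoothness of the inverse of your trivialization needs the observation that $g\mapsto e^{-v(g)}g$ is smooth and constant on orbits, hence descends through the surjective submersion $\pi$ to a smooth section $\mathcal{C}^s\to\mathcal{M}^s$; and the ``analytic bookkeeping'' (smoothness in $H^s$ of determinants, square roots, logarithms, and division) does require the $C^0$-embedding plus compactness of $F$ to keep the relevant quantities uniformly bounded away from zero, exactly as you indicate. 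Neither is a substantive gap.
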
	

For surfaces with genus at least two, there is a unique hyperbolic metric in each conformal class of metrics. Let $\mathcal{M}_{-1}$ and $\mathcal{M}_{-1}^s$ be the space of all smooth hyperbolic metrics and the corresponding Hilbert manifold, then Fischer and Tromba \cite{fischer1984purely} proved that $\mathcal{M}_{-1}^s$ and $\mathcal{C}^s$ were diffeomorphic, so we can use them interchangeably.

	We can take further quotient of $\mathcal{C}^s$ by group action of $\mathcal{D}_0^{s+1}$. This quotient gives a trivial fibre bundle description of the Teichm\"uller
 space $\mathcal{T}^s$ in \cite{earle1969fibre, fischer1984purely}.
	\begin{theorem}
	Assume a surface $F$ is of genus $g>1$. The group action $ \mathcal{D}_0^{s+1}\times\mathcal{C}^s \to \mathcal{C}^s$ by pullback is smooth, free, and proper. The quotient space is the Teichm\"uller
 space $\mathcal{T}^s$, and the quotient map $\pi: \mathcal{C}^s \to \mathcal{C}^s/\mathcal{D}_0^{s+1} = \mathcal{T}^s$ gives a trivial principle fibre bundle structure to $(\mathcal{C}^s, \mathcal{T}^s, \pi)$ . 
	\end{theorem}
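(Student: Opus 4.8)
The plan is to reduce the statement to an application of Theorem 3.1 together with a separate triviality argument, working throughout under the identification $\mathcal{C}^s \cong \mathcal{M}_{-1}^s$ established above. Under this identification the pullback action of $\mathcal{D}_0^{s+1}$ on conformal classes becomes the pullback action on hyperbolic metrics, $(f,g)\mapsto f^*g$. This is well-defined because pullback preserves curvature, so $f^*g$ is again hyperbolic and is precisely the canonical representative of the pulled-back conformal class. It therefore suffices to verify that this action is smooth, free, and proper, and then to establish triviality of the resulting principal bundle.

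For smoothness, I would invoke the composition (``$\omega$-'') lemmas for Sobolev sections: for $s$ above the critical exponent the maps $(f,g)\mapsto f^*g$ and $(f,g)\mapsto (f^{-1})^*g$ are smooth as maps of Hilbert manifolds, which is exactly the content of the analysis of the action $\mathcal{D}^{s+1}\times\mathcal{M}^s\to\mathcal{M}^s$ in \cite{ebin1967space}; restricting the group to the closed subgroup $\mathcal{D}_0^{s+1}$ and the metrics to the closed submanifold $\mathcal{M}_{-1}^s$ preserves smoothness. For freeness, suppose $f^*g=g$ with $f\in\mathcal{D}_0^{s+1}$. Then $f$ is an isometry of the closed hyperbolic surface $(F,g)$ and hence lies in a finite group; since $g>1$ forces $\chi(F)<0$, the only finite-order diffeomorphism isotopic to the identity is the identity itself, so $f=\mathrm{id}$ and the action is free.

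The main obstacle is properness, that is, showing that $(f,g)\mapsto(f^*g,g)$ is a proper map, so that sequences with $g_n\to g$ and $f_n^*g_n\to h$ admit a subsequence along which $f_n$ converges in $\mathcal{D}_0^{s+1}$. I would obtain this by restriction from the theorem of \cite{ebin1967space} that $\mathcal{D}^{s+1}$ acts properly on all of $\mathcal{M}^s$: properness is inherited upon restricting a proper map to the closed subset $\mathcal{D}_0^{s+1}\times\mathcal{M}_{-1}^s$ of the domain with target $\mathcal{M}_{-1}^s\times\mathcal{M}_{-1}^s$, since preimages of compact sets remain closed subsets of compact sets. If one instead argues directly, the pinching of the curvature to $-1$ prevents collapsing and yields uniform injectivity-radius and $C^0$ bounds on the $f_n$; the Arzel\`a--Ascoli theorem then extracts a uniformly convergent subsequence, and elliptic bootstrapping applied to the uniformly elliptic relation $f_n^*g_n=h_n$ upgrades this to convergence in $H^{s+1}$, the limit again being a diffeomorphism because the isometry relation is closed. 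With smoothness, freeness, and properness in hand, Theorem 3.1 immediately gives that the orbits are closed submanifolds, that $\mathcal{T}^s=\mathcal{C}^s/\mathcal{D}_0^{s+1}$ is a smooth Hilbert manifold, and that $\pi$ is a smooth principal $\mathcal{D}_0^{s+1}$-bundle.

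Finally, for triviality I would use that the base is contractible: the Teichm\"uller space $\mathcal{T}^s$ of a genus-$g$ surface is homeomorphic to $\mathbb{R}^{6g-6}$, so the numerable principal bundle $\pi$ over a contractible paracompact base is trivial. Alternatively, and more in the spirit of \cite{earle1969fibre}, I would construct an explicit global section by selecting, relative to a fixed base structure, the unique harmonic (equivalently Teichm\"uller) representative in each orbit; the smooth dependence of this representative on the Teichm\"uller parameter furnishes a smooth section and hence a global trivialization. Either route completes the identification of $(\mathcal{C}^s,\mathcal{T}^s,\pi)$ as a trivial principal $\mathcal{D}_0^{s+1}$-bundle.
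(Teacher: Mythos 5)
The paper never actually proves this theorem: it is stated as background, cited from Earle--Eells \cite{earle1969fibre} and Fischer--Tromba \cite{fischer1984purely}, so there is no in-paper proof to compare against line by line. Your reconstruction is essentially correct, and it follows precisely the route the paper sets up: verify that the action is smooth, free, and proper, then invoke Theorem 3.1; this is also how the paper proves its own analogue for the shape space, namely that $\mathcal{D}_0^{s+1}$ acts smoothly, freely, and properly on $\mathcal{M}^s$. The differences are in the details. For freeness, the paper's shape-space proof uses uniqueness of harmonic diffeomorphisms (Coron--H\'elein), while you use hyperbolic geometry; your argument works, though the detour through finiteness of the isometry group is unnecessary --- once $f$ is an isometry of the closed hyperbolic surface $(F,g)$ isotopic to the identity, a lift of $f$ to $\mathbb{H}^2$ commutes with the deck group and moves every point a bounded distance, hence is the identity. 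Your properness-by-restriction argument is valid once you observe that $\mathcal{D}_0^{s+1}$ is closed in $\mathcal{D}^{s+1}$ (it is the identity component) and that $\mathcal{M}_{-1}^s$ is closed in $\mathcal{M}^s$ (curvature is continuous in the $H^s$ topology for large $s$); this parallels the paper's one-line inheritance of properness from Palais--Fischer for its shape-space theorem. For triviality you use contractibility of the base $\mathcal{T}^s\cong\mathbb{R}^{6g-6}$, which is legitimate but imports Fenchel--Nielsen/Teichm\"uller theory, whereas Earle--Eells argue in the opposite direction: they construct a global section directly and then deduce contractibility of $\mathcal{D}_0^{s+1}$ from the resulting product structure, which is also how the paper later gets triviality of $(\mathcal{M},\mathcal{S},\pi)$.

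One genuine inaccuracy: your parenthetical claim that the harmonic representative is ``equivalently Teichm\"uller'' is false --- harmonic diffeomorphisms between hyperbolic surfaces are not Teichm\"uller (extremal quasiconformal) maps, and the two notions must not be conflated. The section built from harmonic representatives does work (it is well defined because composing a harmonic map with an isometry is harmonic), so this slip is confined to a parenthetical in your alternative triviality argument and does not affect the correctness of the main proof.
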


The two groups can be combined to form a semidirect product $\mathcal{D}_0^{s+1} \ltimes \mathcal{P}^s$, which is called the \textit{conformorphism group} in Fischer\cite{fischer1977manifold} denoted by $\mathcal{E}_0^s$. It acts on $\mathcal{M}^s$ by 
$$\mathcal{E}_0^s\times\mathcal{M}^s\to\mathcal{M}^s \quad ((f, u), g)\to e^u\cdot f^*g;$$
$$(f_1, u_1)\cdot(f_2, u_2) = (f_2\circ f_1, e^{u_2+(u_1\circ f_2)}).$$

The quotient of the group action on $\mathcal{M}^s$ gives the Teichm\"uller
 space $\mathcal{T}^s$. This follows since $\mathcal{P}^s$ is a normal subgroup of $\mathcal{E}^s_0$ hence the two-step quotient $(\mathcal{M}^s/\mathcal{P}^s)/\mathcal{D}^s_0$ is isomorphic structure to $\mathcal{M}^s/\mathcal{E}^s_0$ \cite{fischer1977manifold}. In summary, we have the following diagram with two trivial fibre bundle structures

	\begin{displaymath}
	\begin{tikzcd}
\mathcal{M}^s \arrow[d,swap,"\mathcal{P}^s"] \arrow[dr,"\mathcal{E}_0^s"]& \quad \\
\mathcal{C}^s \arrow[r,"\mathcal{D}_0^{s+1}"] &   \mathcal{T}^s  
	\end{tikzcd}.
\end{displaymath}

Given these two trivial bundle structures, we can formally write $\mathcal{M}^s = \mathcal{P}^s\times\mathcal{D}^{s+1}_0\times \mathcal{T}^s$. It means that for any given metric $g \in \mathcal{M}^s$, there exist elements in $u\in\mathcal{P}^s$, $f\in\mathcal{D}^s$ and $[\tau]\in\mathcal{T}^s$ such that $g = e^u f^*(\sigma([\tau]))\in \mathcal{M}^s$. 
Here we don't have a canonical choice for a section $\sigma: \mathcal{T}^s \to \mathcal{M}_{-1}^s$, although a global section exists since the bundle is trivial.

\subsection{The space of shapes and its quotient}
Motivated by the definition of the Teichm\"uller space, we define the space of shapes as follows.
\begin{definition}
	Let $F$ be a closed orientable connected surface. The \textit{space of shapes} of $F$, or the \textit{shape space}, denoted by $\mathcal{S}(F)$, is the space of equivalence classes of metrics on the surface $F$, where two metrics $g_1$ and $g_2$ are equivalent if there exists an isometry $f: (F, g_1)\to (F, g_2)$ isotopic to the identity.  
\end{definition}
 This space is the quotient of $\mathcal{M}$ by the action of $\mathcal{D}_0$ as pullbacks. Alternatively, we can regard the elements in the shape space as equivalence classes of marked surfaces, denoted by $(F_i, \phi_i, g_i)$, where $F_i$ is a surface with metric $g_i$ diffeomorphic to $F$ via a marking $\phi_i: F_i\to F$. Two marked Riemannian surfaces $(F_1, \phi_1, g_1)$ and $(F_2, \phi_2, g_2)$ are equivalent if there exists an isometry $f: (F_1, g_1) \to (F_2, g_2)$ so that $ f \circ \phi_1$ is isotopic to $\phi_2$.

	We show that $\mathcal{S}^s$, the Hilbert manifold arising as the quotient manifold of the action by $\mathcal{D}_0^{s+1}$ on $\mathcal{M}^s$, has a principal bundle structure, which defines the differential structure on the shape space $\mathcal{S}^s$.

\begin{theorem}
	 The action by $\mathcal{D}^{s+1}_0$ on the space $\mathcal{M}^s$ is smooth, free, and proper if the surface $F$ has genus $g>1$. Hence the quotient space $\mathcal{S}^s = \mathcal{M}^s/\mathcal{D}^{s+1}_0$ is a smooth Hilbert manifold, and the quotient map $\pi: \mathcal{M}^s \to  \mathcal{M}^s/\mathcal{D}^{s+1}_0 = \mathcal{S}^s$ is smooth. $(\mathcal{M}^s, \mathcal{S}^s, \pi)$ has the structure of a principle fibre bundle with structure group $\mathcal{D}^{s+1}_0$. 
	 
\end{theorem}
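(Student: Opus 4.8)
The plan is to verify the three hypotheses of Theorem 3.1 for the pullback action
\[ \mathcal{D}_0^{s+1}\times\mathcal{M}^s\to\mathcal{M}^s, \qquad (f,g)\mapsto f^*g, \]
and then simply read off the stated conclusion. Since $\mathcal{D}_0^{s+1}$ is the identity component of $\mathcal{D}^{s+1}$, it is an open (and closed) sub-Hilbert-Lie-group of the Hilbert Lie group $\mathcal{D}^{s+1}$ constructed by Ebin, so Theorem 3.1 applies as soon as the action is shown to be smooth, free, and proper.

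For smoothness and properness I would cite Ebin\cite{ebin1967space} directly. Ebin proved that the pullback action of the full group $\mathcal{D}^{s+1}$ on $\mathcal{M}^s$ is smooth---the diffeomorphisms are taken one Sobolev order higher than the metrics precisely to absorb the derivative loss in the composition estimates ($\omega$-lemma)---and, as his main technical result, that this action is proper. Smoothness restricts immediately to the open subgroup $\mathcal{D}_0^{s+1}$, and properness restricts to any closed subgroup, so both properties pass to $\mathcal{D}_0^{s+1}$. Neither step uses the genus hypothesis.

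Freeness is where the assumption $g>1$ enters, and I would deduce it from Theorem 3.4 rather than argue from scratch. First, the pullback action descends to $\mathcal{C}^s=\mathcal{M}^s/\mathcal{P}^s$: for $u\in\mathcal{P}^s$ one computes
\[ f^*(e^u g)=e^{\,u\circ f}\,f^*g, \]
and $u\circ f\in\mathcal{P}^s$ since $f$ is one order more regular, so $f$ sends conformal classes to conformal classes and the induced action on $\mathcal{C}^s$ is exactly the one shown to be free in Theorem 3.4. Now if $f^*g=g$ for some $f\in\mathcal{D}_0^{s+1}$, then passing to $\mathcal{C}^s$ gives $f^*[g]=[g]$, and freeness of the $\mathcal{D}_0^{s+1}$-action on $\mathcal{C}^s$ forces $f=\mathrm{id}$; hence the action on $\mathcal{M}^s$ is free. (Equivalently, one argues directly: an isometry of $(F,g)$ preserves the conformal class, hence is a conformal automorphism of the uniformizing hyperbolic surface, which for $g>1$ has finite automorphism group; a nontrivial finite-order diffeomorphism of a surface of genus $\geq 2$ is never isotopic to the identity, so any isometry lying in $\mathcal{D}_0^{s+1}$ is trivial. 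This viewpoint also explains why $g>1$ cannot be dropped: on the torus, translations are isometries of a flat metric isotopic to the identity, so the action fails to be free.)

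With the action smooth, free, and proper, Theorem 3.1 yields that each orbit $\mathcal{D}_0^{s+1}g$ is a closed smooth submanifold, that $\mathcal{S}^s=\mathcal{M}^s/\mathcal{D}_0^{s+1}$ is a smooth Hilbert manifold, and that $\pi\colon\mathcal{M}^s\to\mathcal{S}^s$ is a smooth submersion exhibiting $(\mathcal{M}^s,\mathcal{S}^s,\pi)$ as a principal fibre bundle with structure group $\mathcal{D}_0^{s+1}$. The genuinely deep ingredient is Ebin's properness theorem, which I treat as an external citation; the only step requiring both the genus hypothesis and a direct (if short) argument is freeness, which is the main point to get right.
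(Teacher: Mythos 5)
Your proof is correct, and it shares the paper's overall skeleton: verify that the pullback action is smooth, free, and proper, then invoke Theorem 3.1. Smoothness and properness are handled by citation in both (the paper credits Palais and Fischer, via Tromba's book, for properness rather than Ebin, and then restricts to the subgroup exactly as you do --- a citation-level difference only). Where you genuinely diverge is the freeness step, which is the only part either proof argues for. The paper uses harmonic maps: by Coron--H\'elein, a harmonic diffeomorphism between compact surfaces of genus $>1$ is the unique minimizer of the Dirichlet energy in its homotopy class; an isometry of $(F,g_0)$ is harmonic, the identity is harmonic, and both lie in the same homotopy class, so the isometry is the identity. You instead descend the action to $\mathcal{C}^s$ via $f^*(e^u g)=e^{u\circ f}f^*g$ and quote the freeness of the $\mathcal{D}_0^{s+1}$-action on conformal classes (this is Theorem 3.3 in the paper's numbering, not 3.4), with a second, self-contained variant through the finiteness of the conformal automorphism group of a genus $>1$ surface plus the fact that no nontrivial finite-order diffeomorphism is isotopic to the identity. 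Both routes are valid and both use $g>1$ exactly where they must. Yours is arguably tighter within the paper's own logical structure, since the $\mathcal{C}^s$ theorem is already on record and your deduction from it is immediate; the paper's harmonic-map argument is independent of that theorem and instead reuses the Coron--H\'elein machinery it introduced in Section 2, which also foreshadows the role of energy minimizers later in the paper. Your closing observation that translations of a flat torus break freeness for $g=1$ matches the paper's subsequent remark on why the torus falls outside this bundle picture.
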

\begin{proof}
		 The smoothness of the action of $\mathcal{D}^{s+1}$ on $\mathcal{M}^s$ was proved in detail by Ebin\cite{ebin1967space}. The properness of the action of $\mathcal{D}^{s+1}$ was given by Palais and Fischer (see, e.g. \cite{tromba2012teichmuller}) using a straightforward computation, so the same holds for the action of its normal subgroup $\mathcal{D}_0^{s+1}$. Hence we only need to prove that the action is free. We need to show that if $f^*g = g$ and $\mathcal{D}_0^{s+1}$, then $f$ has to be the identity.
	 
	 We prove it with harmonic maps. By Coron and Helein\cite{coron1989harmonic}, any smooth harmonic diffeomorphism between two compact Riemannian surfaces is a minimizer of the Dirichlet energy in its homotopy class, and it is unique if the genus is larger than 1. Hence for any metric $g_0$ on a surface $F$, if we have an isometry $f:(F, g_0) \to (F, g_0)$ isotopic to the identity, it has to be the identity by the uniqueness of harmonic maps, since the identity is a harmonic map.
\end{proof}

In the previous discussions, we consider all the spaces to be in the category of Hilbert manifolds 
$\mathcal{M}^s$, $\mathcal{S}^s$, $\mathcal{C}^s$, and $\mathcal{T}^s$ for sufficient large $s>0$, to guarantee the continuity of metric tensors and their derivatives. 
By choosing the category of the \textit{Inverse Limit Hilbert} structure, or ILH-structure, defined by Omori\cite{omori1970group}, the results above also hold for ILH-Lie groups $\mathcal{P}$, $\mathcal{D}_0$, and spaces $\mathcal{M}$, $\mathcal{M}_{-1}$, $\mathcal{S}$, $\mathcal{T}$(see, e.g.\cite{fischer1984purely}), so we will use this category in the rest of this paper. Notice that if the genus $g$ of $F$ is larger than one, the corresponding spaces $\mathcal{M}$ and $\mathcal{D}_0$ are contractible in this category, so the shape space $\mathcal{S}(F)$ is a contractible space, and the bundle structure $(\mathcal{M}, \mathcal{S}, \pi)$ is trivial.

Our next goal is to understand the connection between the space of shapes $\mathcal{S}$ and the Teichm\"uller space $\mathcal{T}$. There is a natural projection from $\mathcal{S}$ to  $\mathcal{T}$. By the Uniformization Theorem, there exists a unique hyperbolic metric $\bar{g}$ in the conformal class of $g$. The identity map $id: (F, g) \to (F, \bar{g})$ is conformal,  so we can define the following projection
$$	j: \mathcal{S} \to \mathcal{T} \quad [g]  \to [\bar{g}]. $$
\begin{lemma}
	The projection map $j: \mathcal{S} \to  \mathcal{T} $ is well-defined and smooth. 
\end{lemma}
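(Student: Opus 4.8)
The plan is to prove the two assertions separately while exploiting the commutative diagram of fibre bundles established above. The guiding observation is that $\mathcal{D}_0^{s+1}$ sits inside the conformorphism group $\mathcal{E}_0^s = \mathcal{D}_0^{s+1}\ltimes\mathcal{P}^s$ as the subgroup of conformorphisms with trivial conformal factor, so that every $\mathcal{D}_0^{s+1}$-orbit in $\mathcal{M}^s$ is contained in a single $\mathcal{E}_0^s$-orbit. Since $\mathcal{S}^s=\mathcal{M}^s/\mathcal{D}_0^{s+1}$ and $\mathcal{T}^s=\mathcal{M}^s/\mathcal{E}_0^s$, this inclusion of orbits induces a natural set-theoretic projection $\mathcal{S}\to\mathcal{T}$, and I would first check that it coincides with the map $j$ defined through uniformization. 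Indeed, $g$ and its uniformizing hyperbolic metric $\bar g$ are pointwise conformal, hence lie in the same $\mathcal{P}^s$-orbit and therefore in the same $\mathcal{E}_0^s$-orbit, so the $\mathcal{E}_0^s$-class of $g$ is carried to $[\bar g]\in\mathcal{T}$. This reduces the whole statement to recognizing $j$ as the canonical projection induced by a subgroup inclusion.

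For well-definedness, suppose $g_1$ and $g_2$ represent the same shape, i.e. $g_1=f^*g_2$ for some $f\in\mathcal{D}_0$. Then $f^*\bar g_2$ has constant curvature $-1$, because pullback by a diffeomorphism preserves curvature, and it is pointwise conformal to $f^*g_2=g_1$, because pullback preserves the conformal class. By the uniqueness clause of the Uniformization Theorem (Theorem 2.1), $\bar g_1=f^*\bar g_2$. Hence $\bar g_1$ and $\bar g_2$ lie in the same $\mathcal{D}_0$-orbit inside $\mathcal{C}\cong\mathcal{M}_{-1}$, so they determine the same point of $\mathcal{T}=\mathcal{C}/\mathcal{D}_0$, giving $j([g_1])=j([g_2])$.

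For smoothness I would invoke the universal property of surjective submersions. By Theorem 3.5 the quotient map $\pi_{\mathcal{S}}:\mathcal{M}\to\mathcal{S}$ is a principal $\mathcal{D}_0$-bundle, hence a submersion admitting smooth local sections, while the composite $\pi_{\mathcal{T}}:\mathcal{M}\to\mathcal{C}\to\mathcal{T}$ of the bundle projections from Theorems 3.2 and 3.3 is smooth. The factorization $j\circ\pi_{\mathcal{S}}=\pi_{\mathcal{T}}$ holds by the orbit-inclusion description above, so choosing a smooth local section $\sigma$ of $\pi_{\mathcal{S}}$ over an open set $U\subset\mathcal{S}$ yields $j|_U=\pi_{\mathcal{T}}\circ\sigma$, smooth as a composition of smooth maps; since such $U$ cover $\mathcal{S}$, the map $j$ is smooth. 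The main obstacle, and the only step that genuinely uses the deep input of the preceding section, is ensuring that the nonlinear assignment $g\mapsto\bar g$ carries enough regularity. I expect to avoid differentiating this uniformization map directly: its smoothness is already packaged in the diffeomorphism $\mathcal{C}^s\cong\mathcal{M}_{-1}^s$ and in the projection $\mathcal{C}\to\mathcal{T}$, which is exactly why the submersion argument succeeds without further analytic work. The remaining care concerns the genus-one case, where uniformization produces a flat rather than a hyperbolic representative, unique only after normalizing the total area; there the same orbit argument applies once $j$ is set up on area-normalized metrics, whereas the smoothness argument as written relies on the hypothesis $g>1$ through Theorems 3.3 and 3.5.
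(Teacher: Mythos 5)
Your proposal is correct and takes essentially the same route as the paper: well-definedness follows from uniqueness of the hyperbolic representative of a conformal class (the paper argues equivalently that the induced conformal diffeomorphism between the hyperbolic representatives must be an isometry), and smoothness follows by writing $j$ locally as a smooth section of the principal bundle $\mathcal{M}\to\mathcal{S}$ composed with the smooth projections $\mathcal{M}\to\mathcal{M}_{-1}\cong\mathcal{C}\to\mathcal{T}$ (the paper uses a global section, available since that bundle is trivial, while your local sections suffice just as well). Your extra bookkeeping---identifying $j$ with the quotient projection induced by the inclusion $\mathcal{D}_0\subset\mathcal{E}_0$, verifying the factorization $j\circ\pi_{\mathcal{S}}=\pi_{\mathcal{T}}$, and flagging the genus-one normalization---is sound and makes explicit what the paper leaves implicit.
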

\begin{proof}
Given $g_1$ and $g_2$ representing one equivalent class in $\mathcal{S}$ and their conformally equivalent hyperbolic metrics $\bar{g_1}$ and $\bar{g_2}$, we have an isometry $f: (F, g_1)\to (F, g_2)$ isotopic to the identity. It induces a conformal map from  $(F, \bar{g_1})$ to $(F, \bar{g_2})$ by $\bar{f} = id\circ f \circ id^{-1}$, since $id^{-1}$, $f$ and $id$ are conformal. Then $\bar{f}$ has to be an isometry since conformal diffeomorphisms between hyperbolic surfaces are isometries. Hence $\bar{g_1}$ and $\bar{g_2}$ represent the same element in the Teichm\"uller space.

This projection can be constructed explicitly using the bundle structure of $\mathcal{S}$. Since the bundle structure of $\mathcal{M}$ over $\mathcal{S}$ is trivial, there exists a smooth global section $\sigma: \mathcal{S} \to \mathcal{M}$. We can compose this section with the two smooth projections from $\mathcal{M} \to \mathcal{M}_{-1}$ and $\mathcal{M}_{-1} \to \mathcal{T}$ to construct the projection $j$. 
\end{proof}
Unfortunately we can't take the quotient of $\mathcal{S}$ by the group action of $\mathcal{P}$ directly to construct a well-defined group action. This is due to the fact that a function $u\in\mathcal{P}$ has a fixed value at a fixed point while every element in $\mathcal{S}$ can be represented using different metrics, which achieve possibly different values at a fixed point. It can also be seen by the fact that $\mathcal{D}_0$ is not a normal subgroup of $\mathcal{E}_0$, hence $\mathcal{E}_0/(\mathcal{D}_0, 1)$ is not isomorphic to $\mathcal{P}$ as groups.

In summary, we have four spaces $\mathcal{M}$, $\mathcal{S}$, $\mathcal{C}$, and $\mathcal{T}$ in a commutative diagram
\begin{displaymath}
	\begin{tikzcd}
\mathcal{M} \arrow[r,"\mathcal{D}_0"] \arrow[d,swap,"\mathcal{P}"]  \arrow[rd,"\mathcal{E}_0"] &
  \mathcal{S} \arrow[d,"j"] \\
\mathcal{C} \arrow[r,"\mathcal{D}_0"] & \mathcal{T}
	\end{tikzcd}. 
\end{displaymath}

The group action $\mathcal{D}\times\mathcal{M}\to\mathcal{M}$ is more subtle since certain metric tensors have non-trivial symmetries. For example, hyperbolic surfaces with genus $g$ may have isometry groups with order up to $84(g-1)$(see, e.g.\cite{farb2011primer}). 

The diagram above holds for surfaces $F$ with $g>1$. For the torus, its diffeomorphism group $\mathcal{D}_0$ could contain non-trivial isometries, so the action of $\mathcal{D}_0$ on $\mathcal{M}$ may not be free. By Earle and Eells\cite{earle1969fibre}, $\mathcal{D}_0$ is not contractible and has the same homotopy type as the torus, so the shape space $\mathcal{S}$ is not contractible. It does not fit in the picture for higher genus cases. Nevertheless, we define a metric structure on the shape space of a surface $F$, including the torus in the next section.

\section{Metrics on the Space of Shapes on Surfaces}
	In this section, we define a distance function between two shapes in the shape space $\mathcal{S}$ of a closed orientable surface $F$ of genus  $g\geq1$. We first discuss how to compare shapes using diffeomorphisms, then define a metric based on two energies defined for quasiconformal homeomorphisms on $F$. 
	
\subsection{Measurement of distortion}
To compare two shapes, we find an ``optimal" diffeomorphism between two shapes on a surface and measure its deviation from an isometry. In general, we can measure the distortion of $f: (F, g_1)\to (F, g_2)$ by the singular values of its differential, where the differential at a point $p$ is
$$ df_p : (T_pF, g_1) \to (T_{f(p)}F, g_2).$$
With an appropriate orthonormal basis in each metric, it can be expressed as 
\[
df_p = T = 
\begin{bmatrix}
    \lambda_1(p)     & 0 \\
    0     & \lambda_2(p)
\end{bmatrix}.
\]
where $\lambda_1(p)$ and $\lambda_2(p)$ are the singular values of $df_p$ as a linear transformation.
The area distortion of $f$ at $p$ is measured by the Jacobian $J_f(p) = \lambda_1(p)\lambda_2(p)$. The ratio of the two singular values at $p\in F$ corresponds to the eccentricity of the ellipse in the tangent space at $f(p)$ shown in Figure 1. To measure the angle distortion of $f$,  we define the \textit{dilatation} of $f$ at $p$ to be $K_f(p) = \lambda_1(p)/\lambda_2(p)$, assuming $\lambda_1(p) \geq \lambda_2(p)$.

Notice that we can extend these definitions from diffeomorphisms on $F$ to quasiconformal homeomorphisms on $F$. For a quasiconformal homeomorphism $f$ from a region $\Omega\subset\mathbb{C}$ into $\mathbb{C}$, $f_z$ and $f_{\bar{z}}$ are locally square-integrable, and $f$ is differentiable almost everywhere. The Jacobian $J_f$ is well-defined almost everywhere and locally integrable, and the essential supremum of $K_f$ over the surface is bounded. Then we can show that both $\lambda_1$ and $\lambda_2$ are locally square-integrable, satisfying the relations
$$\lambda_1(p) = \sqrt{J_f(p)K_f(p)} \quad \text{  and  } \quad \lambda_2(p) =\sqrt{\frac{J_f(p)}{K_f(p)}} \quad \forall p\in \Omega.$$

Since the Jacobian and dilatation of $f$ are local quantities, we can construct charts on a surface to show that $\lambda_1$ and $\lambda_2$ are well-defined and locally square-integrable for quasiconformal homeomorphisms on the surface $F$.

Based on the two singular values $\lambda_1$ and $\lambda_2$, we can define energies of $f$ measuring the angle distortion and the area distortion of $f$ respectively. 

\begin{definition}
	The \textit{area distortion energy} of a quasiconformal homeomorphism $f:(F,g_1)\to (F,g_2)$ is 
	$$E_{1}(f) = \sqrt{\int_{F}(1 - \sqrt{\lambda_1(p)\lambda_2(p)})^2dA_{g_1}}.$$
	The \textit{angle distortion energy} of $f$ is 
	$$E_{2}(f) = \frac{1}{2} ||\log \frac{\lambda_1(p)}{\lambda_2(p)}||_\infty.$$
	where $\lambda_1(p)$ and $\lambda_2(p)$ are singular values of $f$ at $p\in F$, and $||\cdot||_{\infty}$ is the essential supremum norm on the functions on $F$.
\end{definition}	
Note that if $f$ is a pointwise area-preserving, then $E_1(f) = 0$. If $f$ is conformal, $E_2(f) = 0$. Both of them are zero if and only if $f$ is an isometry. 

\subsection{Metric Structure for Genus Zero Surfaces}

Hass and Koehl\cite{Hass2017} introduced a metric structure for smooth genus-zero surfaces from the intrinsic point of view. By the Uniformization Theorem, any two metrics $g_1$ and $g_2$ on $S^2$ are conformally equivalent. There exists  a conformal diffeomorphism $f: (S^2, g_1) \to (S^2, g_2)$ with a positive function $\lambda_f$, called the \textit{conformal factor}, such that
	$$f^{*}(g_2) = \lambda_{f}^2 g_1 \quad \text{or} \quad g_2(f^{*}(v_1),f^{*}(v_2))_{f(p)}  =  \lambda^2_{f} (p)g_1(v_1,v_2)_p$$ 
	where $ v_1, v_2 \in T_pS^2$ for all $p\in S^2$. In looking for an energy minimizing map, we can restrict to the group of conformal diffeormophisms of the round 2-sphere, which coincides with the group of Mobius transformations isomorphic to $PSL(2,\mathbb{C})$. If we choose an appropriate orthonormal basis in the tangent space for each metric, the differential of a conformal diffeomorphism has a simple expression \cite{Hass2017}
	$$df_p = 
	\begin{bmatrix}
    \lambda_f       & 0  \\
    0     & \lambda_f   \\
    \end{bmatrix}.$$  
	For conformal maps we have $E_2 = 0$ and $E_1$ simplifies to 
	$$E_1(f) = \sqrt{\int_{S^2}(1 - \lambda_{f})^2 dA_{g_1}}.$$

This idea leads to the definition of a metric on the space of shapes of $S^2$ as
	$$d((S^2, g_1),(S^2, g_2)) = \text{inf}\{E_1(f) | f:(S^2, g_1)\to(S^2,g_2)  \text{ a conformal diffeomorphism}\}.$$
	
	In \cite{Hass2017}, Hass and Koehl showed this function $d: \mathcal{S}\times \mathcal{S}\to \mathbb{R}$ gave a metric, and the infimum was achieved by a conformal diffeomorphism. In their framework, the given two surfaces are mapped to the round 2-sphere by conformal maps $c_1$ and $c_2$. They found an optimal conformal diffeomorphism $c_2^{-1}\circ m\circ c_1$ between the two surfaces by minimizing the symmetric distortion energy among the group of Mobius transformations. They proposed an  algorithm to compute the distance between two triangulated surfaces and applied it to describe shapes of proteins and generate evolutionary trees of species \cite{hass2014round, koehl2013automatic, koehl2015landmark}. 
	
\begin{figure}[h!]
  \includegraphics[width=0.6\linewidth]{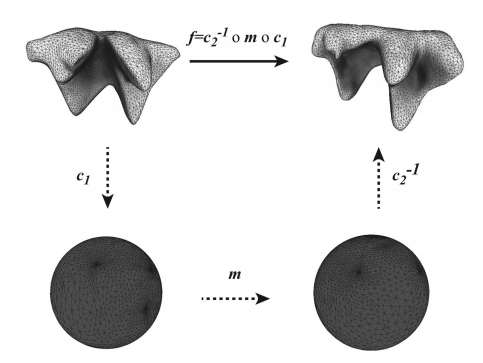}
  \caption{A framework to compare genus-zero surfaces. Picture is a courtesy of Hass and Koehl \cite{Hass2017}.}
\end{figure}

\subsection{Metric Structure for High Genus Surfaces}
There is a fundamental difference between the shape space of genus-zero surfaces $S^2$ and that of higher genus surfaces $F$. Any two shapes on the 2-sphere are conformal, while two shapes on a high genus surface are not necessarily conformally equivalent. We define a distance between two shapes by minimizing the sum of the energies $E_1$ and $E_2$ over the quasiconformal homeomorphisms of $F$ isotopic to the identity. Setting $E(f) = E_1(f) + E_2(f)$, we define a distance function as follows.
\begin{definition} Let $F$ be a closed connected orientable surface of genus $g\geq 1$ and $\mathcal{S}(F)$ be the shape space of $F$. Then we define a function $d: \mathcal{S}(F)\times\mathcal{S}(F) \to \mathbb{R}$ between two shapes in $\mathcal{S}(F)$ represented by $(F, g_1)$ and $(F, g_2)$ to be

$$d((F, g_1), (F, g_2)) = \inf_{f\in \mathcal{Q}_0} {E(f)} = \inf_{f\in \mathcal{Q}_0}\bigg(\sqrt{\int_{F}(1 - \sqrt{\lambda_1\lambda_2})^2dA_{g_1}} +  \frac{1}{2}||\log \frac{\lambda_1}{\lambda_2}||_\infty)$$
where $\mathcal{Q}_0$ is the space of quasiconformal homeomorphisms from $(F, g_1)$ to $(F, g_2)$ isotopic to the identity. 
\end{definition}
Equivalently, we can use marked surfaces to define this metric on the shape space $\mathcal{S}(F)$. Let $(F_1, \phi_1, g_1)$ and $(F_2, \phi_2, g_2)$ represent two different shapes of $F$, then 
$$d((F_1, \phi_1, g_1), (F_2, \phi_2, g_2)) = \inf_{f\in \mathcal{Q}} \{E(f)\} $$
where $\mathcal{Q}$ is the set of quasiconformal homeomorphisms from $(F_1, g_1)$ to $(F_2, g_2)$ isotopic to $\phi_2\circ\phi_1^{-1}$.

In the rest part of this section, we will show that the function $d$ is a distance function on the shape space $\mathcal{S}(F)$, and the energy minimizer is realized by a quasiconformal homeomorphism between two surfaces. We will first prove the existence of the minimizer based on the lower semi-continuity of the energy. 

In general, a sequence of homeomorphisms $f_n$ of a surface may converge to a singular map, such as a constant map. We show that singular maps will not occur for the limit of an energy-minimizing sequence. 

Given two hyperbolic surfaces $(F, \bar{g}_1)$ and $(F, \bar{g}_2)$, all $K$-quasiconformal homeomorphisms between them are equicontinuous. (See Theorem 4.4.1 in \cite{hubbard2006teichmuller}.) The following lemma shows that this result also holds for $K$-quasiconformal homeomorphisms between two flat tori. To prove this lemma, we use the extremal length of curve families in the annulus (see e.g. \cite{fletcher2007quasiconformal}).

\begin{lemma}
Let $f_n:(\mathbb{T}^2, g_1) \to (\mathbb{T}^2, g_2)$ be a family of $K$-quasiconformal homeomorphisms between two flat tori with unit area. Then the maps $f_n$ are equicontinuous. 
\end{lemma}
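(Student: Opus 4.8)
The plan is to reduce equicontinuity to a single scale-invariant modulus-of-continuity estimate and to derive it by an extremal-length (length--area) argument on round annuli. Passing to universal covers, I would write the two tori as $\mathbb{C}/\Lambda_1$ and $\mathbb{C}/\Lambda_2$ with $\Lambda_1,\Lambda_2$ lattices of covolume one, and lift each $f_n$ to a $K$-quasiconformal homeomorphism $\tilde f_n:\mathbb{C}\to\mathbb{C}$ of the whole plane, equivariant under the deck groups. Since the tori are compact and homogeneous it suffices to produce, for every $\varepsilon>0$, one $\delta>0$ \emph{independent of $n$} with $d_1(p,q)<\delta\Rightarrow d_2(f_n(p),f_n(q))<\varepsilon$. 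I argue by contradiction: suppose there are $\varepsilon_0>0$ and points $p_n,q_n$ with $t_n:=d_1(p_n,q_n)\to 0$ but $d_2(f_n(p_n),f_n(q_n))\ge \varepsilon_0$. Choosing lifts $\tilde p_n,\tilde q_n$ with $|\tilde p_n-\tilde q_n|=t_n$, the image lifts then satisfy $|\tilde f_n(\tilde p_n)-\tilde f_n(\tilde q_n)|\ge \varepsilon_0$.

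Next I would exploit the quasi-invariance of the modulus (extremal length) of a ring. Fix $\rho_0$ below the injectivity radius of $(\mathbb{T}^2,g_1)$; for $n$ large enough that $2t_n<\rho_0$, consider the round annulus $A_n=\{\,2t_n<|z-\tilde p_n|<\rho_0\,\}$, of modulus $\tfrac{1}{2\pi}\log\tfrac{\rho_0}{2t_n}$, which tends to $\infty$. Because $\tilde f_n$ is $K$-quasiconformal, the modulus is distorted by at most the factor $K$, so $\mathrm{mod}\big(\tilde f_n(A_n)\big)\ge \tfrac{1}{2\pi K}\log\tfrac{\rho_0}{2t_n}\to\infty$. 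The ring $\tilde f_n(A_n)$ separates the inner continuum $C_n=\tilde f_n(\{|z-\tilde p_n|\le 2t_n\})$, which contains $\tilde f_n(\tilde p_n)$ and $\tilde f_n(\tilde q_n)$ and hence has diameter at least $\varepsilon_0$, from the outer continuum $C_n'=\tilde f_n(\{|z-\tilde p_n|\ge\rho_0\})$, which is unbounded. Writing $D_n=\tilde f_n(\{|z-\tilde p_n|<\rho_0\})$, one has $C_n\subset D_n$ and $C_n'\supset\partial D_n$.

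The crux, and the main obstacle, is to bound $\mathrm{mod}(\tilde f_n(A_n))$ from above uniformly, contradicting the blow-up; this is exactly where the unit-area hypothesis must be converted into a \emph{scale-invariant} geometric bound on the image disks $D_n$. Since $\rho_0$ is below the injectivity radius, the disk $\{|z-\tilde p_n|<\rho_0\}$ embeds in $\mathbb{T}^2_1$, and as $f_n$ is injective and equivariant, $D_n$ projects injectively into $\mathbb{T}^2_2$, whence $\mathrm{Area}(D_n)\le \mathrm{Area}(\mathbb{T}^2_2)=1$. Now $D_n$ is a bounded $K$-quasidisk of area at most one; by the quasisymmetric distortion of whole-plane $K$-quasiconformal maps, a $K$-quasidisk is quasiround, so $\mathrm{diam}(D_n)\le M(K)$ with $M(K)$ depending only on $K$. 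Hence the two complementary continua of $\tilde f_n(A_n)$ lie within distance $M(K)$ of each other while the inner one has diameter $\ge\varepsilon_0$, and the Teichm\"uller extremal-ring estimate yields $\mathrm{mod}(\tilde f_n(A_n))\le \Phi\big(M(K)/\varepsilon_0\big)<\infty$. (Equivalently, a direct area--modulus inequality gives $\mathrm{Area}(\tilde f_n(A_n))\ge c\,\varepsilon_0^{2}\,(e^{c'\,\mathrm{mod}}-1)$, contradicting $\mathrm{Area}\le 1$ at once.) This contradiction establishes the uniform modulus of continuity and hence equicontinuity; the only delicate point is securing the constant $M(K)$, i.e. preventing the image disks from spreading out, so that the diverging source modulus cannot be absorbed.
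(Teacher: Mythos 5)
Your proposal is correct, and its skeleton matches the paper's proof: lift to the universal cover, take a round annulus between the scale of the two nearby points and the injectivity radius, apply quasi-invariance of modulus/extremal length under $K$-quasiconformal maps, and use the unit-area hypothesis to control the image annulus. The genuine difference lies in how the area hypothesis is cashed in. The paper does it directly by the length--area inequality: any curve in the image annulus separating its boundary components encloses both image points, hence has length at least $2\,d_{g_2}(f_n(x),f_n(y))$, and since the lifted image annulus has area at most $1$, the extremal length of the separating family is at least $4\,d_{g_2}(f_n(x),f_n(y))^2$; combined with quasi-invariance this yields the explicit modulus of continuity $d_{g_2}(f_n(x),f_n(y)) \le \sqrt{\pi K/(2\log(J/r))}$, with no contradiction argument. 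Your main route instead converts $\mathrm{Area}(D_n)\le 1$ into $\mathrm{diam}(D_n)\le M(K)$ via the theorem that whole-plane $K$-quasiconformal maps are $\eta_K$-quasisymmetric (so images of round disks are quasiround), and then caps $\mathrm{mod}\bigl(\tilde f_n(A_n)\bigr)$ by the Teichm\"uller extremal-ring estimate; this is correct, and the delicate point you flag is indeed closed by quasisymmetry, but it quotes two heavier standard theorems and, being run by contradiction, produces no quantitative estimate. Your parenthetical alternative (the area--modulus inequality for a ring surrounding a continuum of diameter $\ge \varepsilon_0$) is in substance the paper's length--area argument, so along that route the two proofs essentially coincide. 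What your primary route buys is robustness---it uses only scale-invariant distortion principles and would survive in settings where the separating family's extremal length is not explicitly computable; what the paper's route buys is an elementary, self-contained, and quantitative bound.
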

\begin{proof}
Let $J$ be the injective radius of $(\mathbb{T}^2, g_1)$, and $d_{g_i}(x, y)$ denote the distance between $x$ and $y$ in the metric $g_i$, where $i = 1, 2$. Then for any $0<r<J$, if $d_{g_1}(x, y)< r$, then there exists an embedded annulus $A$ in $(\mathbb{T}^2, g_1)$ centered at the midpoint of $x$ and $y$, whose inner radius is $r/2$ and  outer radius is $J/2$. Moreover,  it separates $\mathbb{T}^2$ into two components, one of which is a flat disk with radius $r/2$ containing $x$ and $y$. 

Lift $A$ isometrically to a flat annulus $\tilde{A}$ in the universal covering $\mathbb{R}^2$, and lift $x$ and $y$ to $\tilde{x}$ and $\tilde{y}$ contained in the disk bounded by the inner boundary of $\tilde{A}$. We consider the extremal length $\lambda(\Gamma)$ of the family of curves $\Gamma$ in $\tilde{A}$ that separate the two boundary circles of $\tilde{A}$, with curves not leaving $\tilde{A}$. Then we have (see e.g.\cite{fletcher2007quasiconformal})
$$\lambda(\Gamma)  = \frac{2\pi}{\log(J/r)}.$$
We also lift $f_n$ to $K$-quasiconformal homeomorhisms $\tilde{f}_n: \mathbb{R}^2 \to \mathbb{R}^2$.
Then by the property of $K$-quasiconformal homeomorphisms, $\tilde{f}_n(A)$ are annulus, and if $\Gamma_1^n = \tilde{f}_n(\Gamma)$, then the curves in $\Gamma_1^n$ are contained in $\tilde{f}_n(A)$ with their extremal length bounded by 
$$\lambda(\Gamma_1^n) \leq K\lambda(\Gamma).$$
By the definition of the extremal length $\lambda(\Gamma)$,  notice that the area of $\tilde{f}_n(A)$ is less than one in the Euclidean metric on $\mathbb{R}^2$, so
$$\lambda(\Gamma_1^n) \geq  L^2 \geq 4d^2(\tilde{f}_n(\tilde{x}), \tilde{f}_n(\tilde{y}))=4d^2_{g_2}(f_n(x), f_n(y)) \quad \forall n,$$ 
where $L$ is the length of the inner boundary curve of $\tilde{f}_n(A)$. The second inequality holds because $\tilde{f}_n$ is a homeomorphism so that $\tilde{f}_n(\tilde{x})$ and $\tilde{f}_n(\tilde{y})$ are in the disk bounded by the inner boundary curve of $\tilde{f}_n(A)$, and the last equality holds because there exists an isometric project from $\mathbb{R}^2$ to $(\mathbb{T}^2, g_2)$. Then we conclude 
$$d_{g_2}(f_n(x), f_n(y)) \leq \sqrt{\frac{\pi K}{2\log\frac{J}{r}}}\quad \forall n.$$
Notice that $d_{g_2}(f(x), f(y)) \to 0$ if $r\to 0$. Hence for any $\epsilon>0$, there exists $r>0$ such that if $d_{g_1}(x,y) <r$, then $d_{g_2}(f(x), f(y)) < \epsilon$. Notice that $r$ doesn't depend on $n$, hence the maps $f_n$ are equicontinuous.

\end{proof}

\begin{theorem}

Assume $F$ has genus $g\geq 1$. Given two metrics $(F, g_1)$, $(F, g_2)$ representing two shapes in $\mathcal{S}(F)$, and an energy-minimizing sequence $f_n \in \mathcal{Q}_0(F)$ such that $E(f_n) \to d((F, g_1), (F, g_2))$ as $n\to \infty$, there is a subsequence of $f_n$ converging to a quasiconformal homeomorphism $f$ such that $E(f) = d((F, g_1), (F, g_2))$. 
\end{theorem}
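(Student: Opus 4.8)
The plan is to produce the minimizer as a uniform limit of a subsequence of $\{f_n\}$, to verify that this limit is a genuine quasiconformal homeomorphism rather than a degenerate map, and then to pass to the infimum using lower semicontinuity of $E_1$ and $E_2$ separately. First I would extract uniform control: because $E(f_n)\to d<\infty$, both $E_1(f_n)$ and $E_2(f_n)$ stay bounded, and the bound on $E_2(f_n)=\tfrac12\|\log(\lambda_1/\lambda_2)\|_\infty$ forces a uniform dilatation bound $K_{f_n}\le K$. Thus $\{f_n\}$, and likewise $\{f_n^{-1}\}$ (since $K_{f^{-1}}=K_f$), is a family of $K$-quasiconformal homeomorphisms. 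Such a family is equicontinuous: for $g>1$ via the quasi-isometry property of $K$-quasiconformal maps between the hyperbolic representatives of the two conformal classes, and for the torus via the equicontinuity lemma proved just above. Hence Arzel\`a--Ascoli yields a subsequence with $f_n\to f$ and $f_n^{-1}\to h$ uniformly, with $f$ and $h$ continuous.

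Next I would rule out degeneration. Passing to the limit in $f_n\circ f_n^{-1}=f_n^{-1}\circ f_n=\mathrm{id}$, using uniform convergence together with equicontinuity to pass the limit through the composition, gives $f\circ h=h\circ f=\mathrm{id}$; hence $f$ is a homeomorphism with $h=f^{-1}$, which excludes collapse to a constant or other singular limit. A uniform limit of $K$-quasiconformal homeomorphisms that is itself a homeomorphism is again $K$-quasiconformal, so $f$ is $K$-quasiconformal and, being a uniform limit of maps isotopic to the identity, lies in $\mathcal{Q}_0$. Since the dilatation is lower semicontinuous under locally uniform convergence, $K_f\le\liminf_n K_{f_n}$, which gives $E_2(f)\le\liminf_n E_2(f_n)$.

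The main obstacle is the lower semicontinuity of the area energy $E_1$. Using that $f$ is quasiconformal and each $f_n$ is a degree-one homeomorphism, I would rewrite
$$E_1(f_n)^2=\mathrm{Area}(g_1)+\mathrm{Area}(g_2)-2\int_F\sqrt{J_{f_n}}\,dA_{g_1},$$
so the statement reduces to the upper semicontinuity $\limsup_n\int_F\sqrt{J_{f_n}}\,dA_{g_1}\le\int_F\sqrt{J_f}\,dA_{g_1}$. The key is to control the Jacobian measures through the inverses: by the area formula, $J_{f_n}\,dA_{g_1}=(f_n^{-1})_*\,dA_{g_2}$, and since $f_n^{-1}\to f^{-1}$ uniformly these measures converge weak-$*$ to $(f^{-1})_*\,dA_{g_2}=J_f\,dA_{g_1}$. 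The limit is absolutely continuous with density exactly $J_f$, so no mass concentrates. Because $t\mapsto\sqrt t$ is concave with sublinear growth, oscillation of $J_{f_n}$ can only decrease $\int_F\sqrt{J_{f_n}}$ and any concentration would contribute nothing in the limit; the concave (Reshetnyak-type) semicontinuity theorem then yields the desired bound, hence $E_1(f)\le\liminf_n E_1(f_n)$.

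Combining the two estimates, $E(f)=E_1(f)+E_2(f)\le\liminf_n E_1(f_n)+\liminf_n E_2(f_n)\le\liminf_n E(f_n)=d$, and since $f\in\mathcal{Q}_0$ we also have $E(f)\ge d$, so $E(f)=d$ and $f$ realizes the distance. I expect the delicate point to be the $E_1$ step: identifying the weak-$*$ limit of the Jacobian measures as the absolutely continuous measure $J_f\,dA_{g_1}$ — which is precisely what the uniform convergence of the inverses buys us — and confirming that the concave functional $\int_F\sqrt{J}$ loses nothing to concentration. The $E_2$ bound and the non-degeneracy of the limit, by contrast, are standard consequences of quasiconformal compactness.
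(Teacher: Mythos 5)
Your proof is correct, and its skeleton matches the paper's: a uniform dilatation bound extracted from $E_2$, equicontinuity (hyperbolic quasi-isometry estimates for $g>1$, the flat-torus extremal-length lemma for $g=1$), Arzel\`a--Ascoli, and then lower semicontinuity of $E_2$ and $E_1$ separately. But you execute the two delicate sub-steps by genuinely different means. For non-degeneracy, the paper applies Arzel\`a--Ascoli to $f_n$ alone and proves injectivity of the limit from the quasi-isometry inequality $d_{\bar g_1}(a,b)\le C(K)\,d_{\bar g_2}(f_n(a),f_n(b))$, finishing with invariance of domain; you instead take uniform limits of both $f_n$ and $f_n^{-1}$ and pass to the limit in $f_n\circ f_n^{-1}=\mathrm{id}$, which is equally valid and hands you $f^{-1}=h$ for later use. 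The real divergence is at $E_1$. The paper notes $\int_F(\sqrt{J_{f_n}})^2\,dA_{g_1}=\mathrm{Area}(F,g_2)$, extracts a weak $L^2$ limit of $\sqrt{J_{f_n}}$, and asserts this limit \emph{equals} $\sqrt{J_f}$; that identification is not actually justified there (weak limits do not commute with $\sqrt{\cdot}$: Jacobians oscillating about $J_f$ have $\sqrt{J_{f_n}}$ converging weakly to something strictly below $\sqrt{J_f}$), though only the inequality $\limsup_n\int_F\sqrt{J_{f_n}}\,dA_{g_1}\le\int_F\sqrt{J_f}\,dA_{g_1}$ is needed for the conclusion. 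Your route proves precisely that inequality: $J_{f_n}\,dA_{g_1}=(f_n^{-1})_*\,dA_{g_2}$ converges weak-$*$ to $(f^{-1})_*\,dA_{g_2}=J_f\,dA_{g_1}$ by uniform convergence of the inverses (the last equality using condition N and the change-of-variables formula for the quasiconformal limit $f$), and concave upper semicontinuity does the rest. If you prefer not to cite a Reshetnyak-type theorem as a black box, the needed bound follows from the pointwise inequality $\sqrt{J_{f_n}}\le\tfrac12(\phi\,J_{f_n}+\phi^{-1})$ valid for every continuous $\phi>0$: integrate, let $n\to\infty$ using the weak-$*$ convergence, and take the infimum over $\phi$, which recovers $\int_F\sqrt{J_f}\,dA_{g_1}$. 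So your argument not only differs at the key step but is more watertight there; you also make explicit that $f\in\mathcal{Q}_0$ (a uniform limit of homeomorphisms isotopic to the identity), which the paper uses only tacitly in concluding $E(f)\ge d((F,g_1),(F,g_2))$.
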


\begin{proof}
Since $E(f_n) \to d((F, g_1), (F, g_2))$, we assume that $E(f_n) < K$ for some $K>0$. Then the maps $f_n$ are $K$-quasiconformal homeomorphisms on a compact surface $F$.   Then the maps $f_n$ are equicontinuous and bounded with respect to the corresponding metrics $\bar{g}_1$ and $\bar{g}_2$ of constant curvature, and if $F$ is the torus, we normalize $\bar{g}_1$ and $\bar{g}_2$ to be metrics with unit area. By Arzela-Ascoli, there exists a subsequence converging uniformly to a continuous map $f$. To show $f$ is a homeomorphism, notice that the inverses of the maps $f_n$ are also $D$-quasi-isometries where $D$ does not depend on $n$. Then the equicontinuity of inverses of $f_n$ implies that if $f^{-1}_n(x) = a$ and $f^{-1}_n(y) = b$, then 
$$ d_{\bar{g}_1}(a, b) = d_{\bar{g}_1}(f_n^{-1}(x), f_n^{-1}(y)) \leq C(K)d_{\bar{g}_2}(x, y) = C(K)d_{\bar{g}_2}(f_n(a), f_n(b))$$
where $d_{\bar{g}_i}(x, y)$ denotes the distance between $x$ and $y$ in the metric $\bar{g}_i$ for $i = 1, 2$. Taking the limit $n\to \infty$, we conclude that $f$ is injective. Then $f$ is a continuous injection from a compact 2-manifold to a connected 2-manifold, so it is a homeomorphism by the properness of $f$ and the theorem of invariance of domain. (see e.g.\cite{lee2010introduction}).

Replace $f_n$ by a convergent subsequence and we have $f_n \to f$ uniformly where $f$ is a homeomorphism. For the limit map $f$, notice that its energy is given by
$$E(f)= \sqrt{\int_{F}(1 - \sqrt{J_f})^2dA_{g_1}} +  \frac{1}{2}\log K_f$$
where $J_f$ is the Jacobian of $f$ and $K_f$ is the maximal dilatation of $f$. The lower semicontinuity property of the maximal dilatations for quasiconformal maps \cite{bojarski2013infinitesimal} gives
$$K_f \leq \liminf_{n\to \infty} K_{f_n}.$$

Next we will show that by taking a further subsequence of $f_n$, we have 

$$\int_F (1 - \sqrt{J_f})^2dA_{g_1} = \lim_{n\to \infty}\int_F (1 - \sqrt{J_{f_n}})^2dA_{g_1}.$$

Notice that this term has the following decomposition;
\begin{align*}
\int_F (1 - \sqrt{J_{f_n}})^2dA_{g_1} & =  \int_F dA_{g_1} + \int_F J_{f_n}dA_{g_1} - 2\int_F\sqrt{J_{f_n}} \\
  & =  Area((F, g_1)) + Area((F, g_2)) - 2\int_F \sqrt{J_{f_n}}, 
\end{align*}
where $Area((F, g))$ is the area of the surface $F$ with respect to metric $g$. Similarly we have 

$$\int_F (1 - \sqrt{J_{f}})^2dA_{g_1}  =  Area((F, g_1)) + Area((F, g_2)) - 2\int_F \sqrt{J_{f}}. $$

Consider $\sqrt{J_{f_n}}$ as an element in the function space on $(F, g_1)$ with $L^2$ norm. The area of $(F, g_2)$ gives a uniform bound;

$$\int_F (\sqrt{J_{f_n}})^2 dA_{g_1} = Area((F, g_2)) = \int_F(\sqrt{J_f})^2 dA_{g_1}.$$

The unit closed ball in the function space on $(F, g_1)$ with $L^2$ norm is weakly sequentially compact, so we have a subsequence of $f_n$, denoted again by $f_n$ such that $\sqrt{J_{f_n}}$ converges weakly to $\sqrt{J_f}$. Since $(F, g_1)$ is compact, constant functions are in this function space, hence

$$\lim_{n\to\infty} \int_F \sqrt{J_{f_n}}\cdot 1 dA_{g_1} = \int_F \sqrt{J_{f}}\cdot 1 dA_{g_1}.$$

Thus, we have 
$$E(f) \leq \liminf_{n \to \infty} E(f_n) = d((F, g_1), (F, g_2)).$$
Since $f$ is a quasiconformal homeomorphsim, $E(f) \geq d((F, g_1), (F, g_2))$, hence  $$E(f) = d((F, g_1), (F, g_2)).$$
\end{proof}

We are ready to check that $d$ satisfies the conditions for a distance function. 

\begin{theorem}
	Let $F$ be a closed orientable connected surface of genus $g\geq 1$. The function $d$ induces a metric on the space of shapes $\mathcal{S}(F)$.
\end{theorem}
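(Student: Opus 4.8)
The plan is to verify the four metric axioms for $d$, treating non-negativity and the ``zero distance for identical shapes'' direction as immediate and concentrating on symmetry, the triangle inequality, and the separation axiom. Non-negativity is clear since $E_1(f)\geq 0$ and $E_2(f)\geq 0$ for every $f$, so the infimum is non-negative. If $(F,g_1)$ and $(F,g_2)$ represent the same shape there is an isometry $f$ isotopic to the identity; it is $1$-quasiconformal, its singular values satisfy $\lambda_1=\lambda_2=1$ a.e., so $E_1(f)=E_2(f)=0$ and hence $d=0$. Throughout I use that $\mathcal{Q}_0$ is a groupoid-like family closed under inverses and composition, which follows from the composition law for quasiconformal maps and the fact that being isotopic to the identity is preserved under inverse and composition.

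For symmetry I would prove the identity $E(f)=E(f^{-1})$ for every $f\in\mathcal{Q}_0$. The singular values of $df^{-1}$ at $f(p)$ are $1/\lambda_2(p)\geq 1/\lambda_1(p)$, so the dilatation is unchanged, $K_{f^{-1}}(f(p))=\lambda_1(p)/\lambda_2(p)=K_f(p)$; since $f$ is a homeomorphism preserving null sets the essential supremum is preserved and $E_2(f^{-1})=E_2(f)$. For $E_1$, writing $J=\lambda_1\lambda_2$ and applying the area (change-of-variables) formula, $dA_{g_2}=J_f\,dA_{g_1}$ and $J_{f^{-1}}\circ f=1/J_f$, so that
$$E_1(f^{-1})^2=\int_F\bigl(1-J_{f^{-1}}^{1/2}\bigr)^2 dA_{g_2}=\int_F\bigl(1-J_f^{-1/2}\bigr)^2 J_f\,dA_{g_1}=\int_F\bigl(1-J_f^{1/2}\bigr)^2 dA_{g_1}=E_1(f)^2.$$
The algebraic identity $(1-J^{-1/2})^2 J=(1-J^{1/2})^2$ is exactly what makes the chosen area-distortion energy symmetric. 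Taking infima over $\mathcal{Q}_0$ then gives $d((F,g_1),(F,g_2))=d((F,g_2),(F,g_1))$.

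For the triangle inequality I would establish $E(h\circ f)\leq E(f)+E(h)$ for composable $f,h\in\mathcal{Q}_0$ and then pass to infima (for any $\varepsilon>0$ choose near-optimal $f,h$, use $h\circ f$ as a competitor for $d((F,g_1),(F,g_3))$, and let $\varepsilon\to 0$). The dilatation is submultiplicative, $K_{h\circ f}(p)\leq K_h(f(p))K_f(p)$, so $\log K$ is subadditive and $E_2(h\circ f)\leq E_2(f)+E_2(h)$. For $E_1$, set $a=J^{1/2}$; from $a_{h\circ f}=(a_h\circ f)\,a_f$ I split $1-a_{h\circ f}=(1-a_f)+a_f\,(1-a_h\circ f)$ and apply Minkowski's inequality in $L^2(dA_{g_1})$. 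The second term is handled by the area formula: $\int_F a_f^2(1-a_h\circ f)^2\,dA_{g_1}=\int_F (1-a_h)^2\,dA_{g_2}=E_1(h)^2$, because $a_f^2\,dA_{g_1}=J_f\,dA_{g_1}=dA_{g_2}$ under $f$. Hence $E_1(h\circ f)\leq E_1(f)+E_1(h)$, and adding the two estimates yields the subadditivity of $E$.

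The main obstacle is the separation axiom, i.e. $d((F,g_1),(F,g_2))=0\Rightarrow$ the two metrics represent the same shape. This is where I must invoke the existence of an energy minimizer, via the lower semicontinuity of $E_1$ and $E_2$ together with the normal-family compactness of quasiconformal maps of uniformly bounded dilatation, established elsewhere in the paper: if $d=0$ then a minimizing sequence has dilatations tending to $1$, so it is eventually uniformly quasiconformal, and a subsequential limit $f_0\in\mathcal{Q}_0$ attains $E(f_0)=0$. Then $E_2(f_0)=0$ forces $\lambda_1=\lambda_2$ a.e.\ (so $f_0$ is $1$-quasiconformal, hence conformal) and $E_1(f_0)=0$ forces $\lambda_1\lambda_2=1$ a.e.; together $\lambda_1=\lambda_2=1$ a.e., so $df_0$ is an isometry a.e.\ and $f_0$ is an isometry isotopic to the identity, whence $(F,g_1)$ and $(F,g_2)$ are the same shape. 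The delicate points I would check carefully are that the area formula $\int (G\circ f)J_f=\int G$ is valid for mere quasiconformal homeomorphisms (which holds since they lie in $W^{1,2}_{\mathrm{loc}}$ and are absolutely continuous on lines) and that the subsequential limit of the minimizing sequence remains in the identity isotopy class, both of which must be justified before concluding that $f_0$ realizes the isometry.
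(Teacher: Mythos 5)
Your proposal is correct and follows essentially the same route as the paper: symmetry via the change-of-variables identity $(1-J^{-1/2})^2 J=(1-J^{1/2})^2$, subadditivity of $E_1$ via the decomposition $1-a_{h\circ f}=(1-a_f)+a_f\,(1-a_h\circ f)$, and subadditivity of $E_2$ via submultiplicativity of the singular values; your Minkowski-in-$L^2$ step is exactly the paper's Cauchy--Schwarz expansion, just packaged as the $L^2$ triangle inequality. Two organizational differences are worth noting. First, on the separation axiom the paper simply declares the first two properties ``immediate,'' silently deferring to the existence theorem for energy minimizers proved afterwards; your version makes that dependence explicit and then extracts the isometry from $E_1(f_0)=E_2(f_0)=0$ (conformality from $E_2=0$, unit Jacobian from $E_1=0$), which is the more honest ordering of the logic, and you rightly flag that the subsequential limit must be checked to stay in the isotopy class of the identity. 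Second, the paper devotes a separate argument to well-definedness of $d$ on equivalence classes, conjugating a competitor by isometries ($\tilde f = i_2\circ f\circ i_1^{-1}$) to show $E(\tilde f)=E(f)$; you never address this explicitly, but it is recoverable from what you do prove: since your $d$ is symmetric, satisfies the triangle inequality on representatives, and equivalent metrics are at distance zero, one gets $d(g_1,g_2)\le d(g_1,\tilde g_1)+d(\tilde g_1,\tilde g_2)+d(\tilde g_2,g_2)=d(\tilde g_1,\tilde g_2)$ and conversely, so $d$ descends to $\mathcal{S}(F)$. You should state this reduction explicitly, but it is a routine consequence of your axioms rather than a genuine gap.
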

\begin{proof}
To show the function $d$ is a metric,  we need to check that for any three metrics $(F, g_1)$, $(F, g_2)$, and $(F, g_3)$, we have
\begin{itemize}
	\item[(1)]  $d((F, g_1),(F, g_2))\geq 0$; 
	\item[(2)]  $d((F, g_1),(F, g_2))=0$ if and only if $g_1$ and $g_2$ are isometric by a diffeomorphism isotopic to the identity;
	\item[(3)]  $d((F, g_1),(F,g_2)) = d((F, g_2),(F,g_1))$;
	\item[(4)]  $d((F, g_1),(F, g_3)) \leq d((F, g_1),(F, g_2)) + d((F, g_3),(F, g_2))$.
\end{itemize}

	The first property is immediate. If two surfaces are isometric, both singular values of the differential are one at every point on the surface, hence the distance is zero. If $d((F, g_1),(F, g_2))=0$, by Theorem 4.4, there exists a quasiconformal $f$ homeomorphism realizing this energy, then $f$ is a $1$-quasiconformal homeomorphism, hence is a conformal. Moreover, the area distortion is zero, so it is an isometry isotopic to the identity, which means that $g_1$ and $g_2$ represent the same equivalence class in $\mathcal{S}(F)$.
	
	The symmetry property follows from $E_1(f) = E_1(f^{-1})$ and $E_2(f) = E_2(f^{-1})$. By a similar computation in \cite{Hass2017}, we have 

\begin{align*} 
E_1(f^{-1}) &= \sqrt{\int_F (1 - \sqrt{\frac{1}{\lambda_1\lambda_2}})^2 dA_{g_2}} = \sqrt{\int_F (1- \sqrt{\frac{1}{\lambda_1\lambda_2}})^2 \lambda_1\lambda_2 dA_{g_1}} \\ 
\quad  &= \sqrt{ \int_F (1- \sqrt{\lambda_1\lambda_2})^2 dA_{g_1}} = E_1(f).
\end{align*}
The singular values of $f^{-1}$ are  $1/
\lambda_1$ and $1/\lambda_2$, so the symmetry of $E_2$ is immediate. 
	
	To show the triangle inequality, set $f:(F, g_1)\to(F, g_2)$ and $g: (F, g_2)\to(F, g_3)$, and we show that 
$$E_1(g\circ f) \leq E_1(g) + E_1(f).$$
Let the singular values of $f$, $g$, and $g\circ f$ be $\lambda_1$ and $\lambda_2$, $\mu_1$ and $\mu_2$, $\sigma_1$ and $\sigma_2$ respectively. Then by a similar computation in \cite{Hass2017}, we have
 \begin{align*}
 (E_1(g) + E_1(f))^2 &= \int_F (1-\sqrt{\lambda_1\lambda_2})^2 dA_{g_1} + \int_F (1-\sqrt{\mu_1\mu_2})^2 dA_{g_2} \\ 
 \quad &+ 2 \sqrt{\int_F(1-\sqrt{\lambda_1\lambda_2})^2dA_{g_1}\int_F (1-\sqrt{\mu_1\mu_2})^2 dA_{g_2}}.
 \end{align*}
Notice that $dA_{g_2} = \lambda_1\lambda_2dA_{g_1}$, then by the Cauchy-Schwarz inequality, we have
\begin{align*}
&\sqrt{\int_F(1-\sqrt{\lambda_1\lambda_2})^2dA_{g_1}\int_F (1-\sqrt{\mu_1\mu_2})^2 dA_{g_2}}\\ &=\sqrt{\int_F(1-\sqrt{\lambda_1\lambda_2})^2dA_{g_1}\int_F (1-\sqrt{\mu_1\mu_2})^2 \lambda_1\lambda_2 dA_{g_1}} \\&
 \geq \int_F(1-\sqrt{\lambda_1\lambda_2})(1-\sqrt{\mu_1\mu_2})\sqrt{\lambda_1\lambda_2}dA_{g_1}.
\end{align*}
Hence 
\begin{align*}
(E_1(g) + E_1(f))^2 &\geq \int_F(1-\sqrt{\lambda_1\lambda_2})^2 + (1-\sqrt{\mu_1\mu_2})^2\lambda_1\lambda_2 \\
  &+ 2(1-\sqrt{\lambda_1\lambda_2})(1-\sqrt{\mu_1\mu_2})\sqrt{\lambda_1\lambda_2}dA_{g_1}\\
 & = \int_F ((1-\sqrt{\lambda_1\lambda_2})+ \sqrt{\lambda_1\lambda_2}(1- \sqrt{\mu_1\mu_2}))^2dA_{g_1}\\
 & = \int_F (1 - \sqrt{\lambda_1\lambda_2\mu_1\mu_2})^2dA_{g_1}.
\end{align*}
Since $\sigma_1\sigma_2 = J_{g\circ f} = J_fJ_g = \lambda_1\lambda_2\mu_1\mu_2$, it follows that
$$(E_1(g) + E_1(f))^2 \geq (E_1(g\circ f))^2.$$
To prove the second part of the inequality, namely $E_2(g\circ f) \leq E_2(g) + E_2(f)$, we assume that $\lambda_1\geq \lambda_2$, $\mu_1 \geq \mu_2$, and $\sigma_1 \geq \sigma_2$ for simplicity. Notice that the larger singular value is the 2-norm for the differential $df_p$, and the smaller singular value is the reciprocal of the 2-norm of the inverse of $df_p$. The larger singular value of the composition $g\circ f$ is bounded by
$$\sigma_1(p)  = ||d(g\circ f)_p||_2 = ||dg_{f(p)}\circ df_{p}||_2 \leq ||df_p||_2 ||dg_{f(p)}||_2=\lambda_1(p) \mu_1(p). $$  
Similarly for the inverse, we have
$$ \frac{1}{\sigma_2(p)}  = ||d(f\circ g)^{-1}_p||_2 = ||df^{-1}_p\circ dg^{-1}_{f(p)}||_2 \leq ||df^{-1}_p||_2 ||dg^{-1}_{f(p)}||_2=\frac{1}{\lambda_2(p) \mu_2(p)}. $$  
Hence we have 
$$0<\lambda_2\mu_2\leq \sigma_2\leq \sigma_1\leq \lambda_1\mu_1.$$
Therefore
\begin{align*}
E_2(f)+ E_2(g) &= \frac{1}{2}||\log \frac{\lambda_1}{\lambda_2}||_\infty + \frac{1}{2}||\log \frac{\mu_1}{\mu_2}||_\infty\geq \frac{1}{2}||\log \frac{\lambda_1}{\lambda_2}+\log \frac{\mu_1}{\mu_2}||_\infty \\
&= \frac{1}{2}||\log \frac{\lambda_1\mu_1}{\lambda_2\mu_2}||_\infty \geq \frac{1}{2}||\log\frac{\sigma_1}{\sigma_2}||_\infty = E_2(g\circ f).
\end{align*}
Therefore we show that 
$$E(f) + E(g) \geq E(g\circ f).$$
To pass to the infimum, we  choose $f_n:(F,g_1)\to(F,g_2)$ and $g_n:(F,g_2)\to(F,g_3)$ in $\mathcal{Q}_0$ such that 
$$\lim_{n\to \infty}E(f_n) = d((F,g_1), (F, g_2)) \text{ and } \lim_{n\to \infty}E(g_n) = d((F,g_2), (F, g_3)).$$
Then we have 
$$E(f_n) + E(g_n)\geq E(g_n\circ f_n)\geq d((F,g_1),(F, g_3)).$$
Taking the limit as $n \to \infty$ we have 
$$d((F,g_1),(F, g_2)) + d((F,g_2),(F, g_3))\geq d((F,g_1),(F, g_3)).$$
The last thing to check is that the metric $d$ is well-defined on the shape space. Assume $g_1$ and $ \tilde{g}_1$ represent the same shape, and $g_2$ and $\tilde{g}_2$ represent another shape. Then we have an isometry $i_1: (F, g_1)\to (F, \tilde{g}_1)$ isotopic to the identity and another isometry $i_2: (F, g_2)\to (F, \tilde{g}_2)$ isotopic to the identity. Given $f:(F,g_1)\to(F,g_2)$, consider the map $\tilde{f}:(F, \tilde{g}_1)\to (F, \tilde{g}_2)$ defined as 
$$\tilde{f} = i_2\circ f \circ i_1^{-1}.$$
Since $i_1$ and $i_2$ are isometries, they will not change the singular values, so the singular values of $\tilde{f}$ are given by $\tilde{\lambda}_1(p) = \lambda_1(i_1^{-1}(p))$ and $\tilde{\lambda}_2(p) = \lambda_2(i_1^{-1}(p))$. An isometry also preserves the area, so $dA_{\tilde{g}_1}$ = $dA_{g_1}$. Hence we have 

\begin{align*}
E_1(\tilde{f}) &= \sqrt{\int_F (1-\sqrt{\tilde{\lambda}_1(p)\tilde{\lambda}_2(p)})^2dA_{\tilde{g}_1}} \\ &= \sqrt{\int_F (1-\sqrt{\lambda_1(i_1^{-1}(p))\lambda_2(i_1^{-1}(p))})^2dA_{g_1}} = E_1(f).
\end{align*}

\noindent and 

$$E_2(\tilde{f}) = \frac{1}{2}||\log \frac{\tilde{\lambda}_1}{\tilde{\lambda}_2}||_\infty = \frac{1}{2}||\log \frac{\lambda_1}{\lambda_2}||_\infty = E_2(f).$$
Hence we have 
$$E(\tilde{f}) = E(f). $$
Since $i_1$ and $i_2$ are isotopic to the identity, $f\in \mathcal{Q}_0$ if and only if $\tilde{f} \in \mathcal{Q}_0$. Taking the infimum over $f\in \mathcal{Q}_0$, we conclude that  
$$d((F,g_1),(F,g_2)) =  d((F, \tilde{g}_1),(F, \tilde{g}_2)).$$ 
Hence $d$ is a well-defined metric on $\mathcal{S}$.
\end{proof}
Notice that if we restrict the metric to $\mathcal{T}$, then $d$ will be the Teichm\"uller
 metric.

It is not clear whether the minimizer is unique between two general surfaces.  In the special case where both surfaces $(F, g_1)$ and $(F, g_2)$ are flat tori with unit area, the minimizers are given by affine maps, because affine maps coincide with Teichm\"uller maps on flat tori with unit area, and the Jacobians of affine maps are constant. This forces the Jacobians to be the constant $J \equiv 1$ on $F$. If we fix one point $p$ on $F$, then there is a unique affine map fixing $p$ realizing the infimum of the energy.

\section{Conclusion and Further Work}
We have described a new metric structure on the shape space of a high-genus surface. We first define the shape space of a surface and establish its connections with the Teichm\"uller space. Then we introduce an energy for quasiconformal maps as a measurement of distortion, and define a distance function on the shape space by minimizing this energy among all the quasiconformal homeomorphisms in a given homotopy class of maps between two given shapes. We prove that the minimizer of this energy is a quasiconformal homeomorphism, which produces an optimal correspondence between two shapes. 

In the future, we will design an algorithm to compute the distance between two shapes represented by triangulated surfaces.  The framework of the algorithm in \cite{li2008globally,lui2014geometric, wong2014computation} can be adapted to our case.   Also, the uniqueness of the energy-minimizing map is open. 

\section{Acknowledge}
The author would like to thank his advisors, Joel Hass and Patrice Koehl, for suggesting this problem and constant discussions and encouragement.

\bibliography{ref} 
\bibliographystyle{amsplain}

\end{document}